\newcommand{\be}{\ensuremath{\mathbb E}}
\newcommand{\N}{\ensuremath{\mathbb N}}
\newcommand{\Z}{\ensuremath{\mathbb Z}}
\newcommand{\R}{\ensuremath{\mathbb R}}
\newcommand{\muhaar}{\ensuremath{\mu_{\rm Haar}} }
\newcommand{\G}{\ensuremath{\mathscr G}}
\newcommand{\ghat}{\ensuremath{\mathscr{G}^\wedge}}
\newcommand{\E}{\ensuremath{\mathscr E}}
\newcommand{\B}{\ensuremath{\mathscr B}}
\newcommand{\A}{\ensuremath{\mathscr A}}
\newcommand{\calp}{\ensuremath{\mathcal P}}
\newcommand{\U}{\ensuremath{\mathcal U}}
\newcommand{\heart}[2]{\ensuremath{\left\| #1 \right\|_{\psi,#2}}}
\newcommand{\normo}[1]{\ensuremath{\left\| #1 \right\|^1_\varphi}}
\newcommand{\normi}[1]{\ensuremath{\left\| #1 \right\|^\infty_\zeta}}
\newcommand{\normp}[1]{\ensuremath{\left\| #1 \right\|^\times_\phi}}
\newcommand{\comment}[1]{}
\newtheorem{theorem}[equation]{Theorem}
\newtheorem{prop}[equation]{Proposition}
\newtheorem{lemma}[equation]{Lemma}
\newtheorem{utheorem}{\textrm{\textbf{Theorem}}}
\theoremstyle{definition}
\newtheorem{remark}[equation]{Remark}
\newtheorem{definition}[equation]{Definition}
\numberwithin{equation}{section}
\begin{document}
\title{Integration and measures on the space of countable labelled
graphs}

\author[Apoorva Khare and Bala Rajaratnam]{Apoorva Khare and Bala
Rajaratnam\\ Stanford University}
\address{Department of Mathematics, Stanford University, Stanford, CA -
94305, USA}
\email{A.K.: khare@stanford.edu; B.R.: brajarat@stanford.edu}
%\thanks{Apoorva Khare was partially supported by the DARPA Grant YFA
%N66001-11-1-4131. Bala Rajaratnam was supported in part by grants NSF
%DMS-1106642 and DARPA-YFA N66001-11-1-4131.}

\keywords{Countable graphs, Haar measure, Pontryagin dual}

\date{\today}

\begin{abstract}
In this paper we develop a rigorous foundation for the study of
integration and measures on the space $\G(V)$ of all graphs defined on a
countable labelled vertex set $V$. We first study several interrelated
$\sigma$-algebras and a large family of probability measures on graph
space. We then focus on a ``dyadic" Hamming distance function
$\heart{\cdot}{2}$, which was very useful in the study of differentiation
on $\G(V)$. The function $\heart{\cdot}{2}$ is shown to be a Haar
measure-preserving bijection from the subset of infinite graphs to the
circle (with the Haar/Lebesgue measure), thereby naturally identifying
the two spaces. As a consequence, we establish a ``change of variables"
formula that enables the transfer of the Riemann-Lebesgue theory on
$\R$ to graph space $\G(V)$. This also complements previous work in which
a theory of Newton-Leibnitz differentiation was transferred from the real
line to $\G(V)$ for countable $V$. Finally, we identify the Pontryagin
dual of $\G(V)$, and characterize the positive definite functions on
$\G(V)$.
\end{abstract}
\maketitle

%{{{1 Section 1 - Introduction and main results
\section{Introduction and main results}

%This paper forms the second part of our ongoing work on the study of countable labelled graphs.
The study of very large graphs and their limits has recently been the
focus of tremendous interest, given its importance in a variety of
scientific disciplines including probability and statistics,
combinatorics, computer science, machine learning, and network analysis
in various applied fields. In this regard several limiting theories have
been developed in the literature. Prominent among these is the
comprehensive theory of graphons, which are limits of (dense) unlabelled
graphs (see \cite{Lo} and the references therein).

In the present paper, we work in the parallel setting of labelled graphs
and their limits. Our motivation comes from the fact that often graphs in
real-world situations and observed network data are labelled, and each
vertex has a specific meaning. Similarly in theoretical probability such
as Markov random fields and their applications, nodes in graphs represent
variables that are not exchangeable owing to the dependencies in the
underlying model. This provides motivation to study the space of labelled
graphs and their limits.

In \cite{KR1} a framework was introduced in which to study all finite
labelled graphs at once; namely, the space $\G(V)$ of graphs with a fixed
countable, labelled vertex set $V$. The algebraic and topological
properties of $\G(V)$, as well as continuous functions on $\G(V)$, were
studied in \cite{KR1}. Moreover, a theory of differentiation on graph
space was developed in \cite{KR1}; see also \cite{DGKR} for
differentiation in the unlabelled setting in graphon space.
Note also that the space of graphons is naturally equipped with a large
family of measures that arise from sampling.
We now explore the parallel setting of labelled graph space, with the aim
of studying measures on $\G(V)$ and developing a theory of integration.
This is the goal of the present paper.

The space $\G(V)$ is a compact abelian group; hence the associated Haar
measure naturally gives rise to a theory of integration.
Our first objective is to identify and study the Haar measure.
The next goal of this work is to explore the connections between Haar
integration on graph space and the Riemann-Lebesgue theory on $\R$. As a
consequence of our investigations, we show below that integration on
$\G(V)$ can be reduced to that on the unit interval. This is akin to
\cite{KR1}, in which differentiation on $\G(V)$ was shown to be closely
related to the one-variable Newton-Leibnitz theory on $\R$.\medskip

In this section we will state the main results of the paper.
We begin by setting some notation.
%and recalling some preliminary results from \cite{KR1} on labelled graph space.

\begin{definition}
Given a fixed labelled set $V$, define the corresponding (labelled) graph
space $\G(V)$ to be the set of all graphs with vertex set $V$. In other
words, $\G(V) = \{ 0, 1 \}^{K_V} = (\Z / 2 \Z)^{K_V}$, where $K_V$ is the
complete graph on $V$.
Also define $\G_0(V)$ to be the set of all graphs with finitely many
edges, and $\G_1(V)$ to be the set of all \textit{co-finite} graphs --
i.e., the complements in $K_V$ of finite graphs.
\end{definition}

Henceforth every labelled graph with vertex set $V$ will be identified
with its edge set, which is a subset of $K_V$. Note that $\G(V)$ is the
set of all functions $f : K_V \to \Z / 2\Z$, the discrete field with two
elements. This makes $\G(V)$ a commutative topological $\Z / 2\Z$-algebra
under pointwise addition and multiplication. In particular, the binary
operation $G + G' := G \Delta G'$ makes $\G(V)$ into an abelian
topological group, where $\Delta$ denotes the symmetric difference and
the zero element ${\bf 0} \in \G(V)$ is given by the empty graph on $V$.
Note also that $\G(V)$ is 2-torsion, i.e., $G+G = {\bf 0}$ for all $G \in
\G(V)$.

We next discuss the topological structure of labelled graph space $\G(V)$
for a countable vertex set $V$, as studied in \cite{KR1}. 
%The focus of \cite{KR1} was on developing topology, analysis, and a
%theory of differential calculus for functions on $\G(V)$.
The following family of metrics on $\G(V)$ was crucially used in
\cite{KR1} in developing differential calculus in $\G(V)$, and is also
important for the purposes of the present paper.

\begin{definition}\label{D2}
Suppose $V$ is countable and $\psi : K_V \to \N$ is a fixed bijection.
Given $a > 1$, define $d_{\varphi_a} : \G(V) \times \G(V) \to [0,\infty)$
and $\heart{\cdot}{a} : \G(V) \to [0,\infty)$ via:
\begin{equation}
d_{\varphi_a}(G,G') := \sum_{e \in G \Delta G'} a^{-\psi(e)}, \qquad
\heart{G}{a} := d_{\varphi_a}({\bf 0},G).
\end{equation}

\noindent Next, a sequence $\{ G_n : n \in \N \}$ in $\G(V)$ is said to
\textit{converge (to $G \in \G(V)$)} if the indicator sequences $\{ {\bf
1}_{e \in G_n} : n \in \N \}$ each converge (to ${\bf 1}_{e \in G}$) for
each edge $e \in K_V$.
\end{definition}

The following theorem collects together some of the topological results
in \cite{KR1} on labelled graph space $\G(V)$, which are needed for the
purposes of this paper.

\begin{theorem}[\cite{KR1}]\label{Theart2}
For a fixed labelled set $V$, the set $\G(V)$ of graphs is a commutative,
totally disconnected, compact Hausdorff topological $\Z/2\Z$-algebra.

Now suppose that $V$ is countable and $\psi : K_V \to \N$ is a fixed
bijection.
\begin{enumerate}
\item The maps $\{ d_{\varphi_a} : a \geq 1 \}$ are translation-invariant
metrics on $\G(V)$, which are all topologically equivalent and metrize
the above notion of graph convergence (i.e., they generate the product
topology on $\G(V) = \{ 0, 1 \}^{K_V}$).

\item The sets $\G_0(V), \G_1(V)$ are dense in $\G(V)$.

\item The map $2 d_{\varphi_3}({\bf 0},-) = 2 \heart{\cdot}{3} : \G(V)
\to [0,1]$ is a homeomorphism onto the Cantor set. Thus $\G(V)$ is a
compact metric space.

\item The map $d_{\varphi_2}({\bf 0},-) = \heart{\cdot}{2} : \G(V) \to
[0,1]$ is a surjection, which is a bijection outside $\G_0(V)$. For every
finite nonempty graph $G$, there exists a unique co-finite graph $G'$
such that $\heart{G}{2} = \heart{G'}{2}$.
\end{enumerate}
\end{theorem}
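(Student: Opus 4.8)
The plan is to treat all four parts through the single unifying observation that the fixed bijection $\psi : K_V \to \N$ identifies $\G(V)$ with the sequence space $\{0,1\}^{\N}$ via $G \mapsto (b_n)_n$, where $b_n := \mathbf{1}_{\psi^{-1}(n) \in G}$, and that under this identification each function $\heart{\cdot}{a}$ becomes the ``base-$a$ digit evaluation'' $\sum_n b_n a^{-n}$. The opening assertion that $\G(V)$ is a commutative, totally disconnected, compact Hausdorff topological $\Z/2\Z$-algebra I would dispatch first by pure product-topology considerations: $\G(V) = (\Z/2\Z)^{K_V}$ is a product of copies of the finite discrete ring $\Z/2\Z$, so Tychonoff gives compactness, products of Hausdorff (resp.\ totally disconnected) spaces are Hausdorff (resp.\ totally disconnected), and pointwise addition and multiplication are continuous because each is continuous coordinatewise into a discrete factor. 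This requires no computation beyond citing the standard product-space facts.

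For part (1) I would verify the metric axioms directly. Finiteness of $d_{\varphi_a}(G,G')$ for $a>1$ follows from $\sum_{e} a^{-\psi(e)} = \sum_{n\ge 1} a^{-n} = (a-1)^{-1} < \infty$ since $\psi$ is a bijection onto $\N$; symmetry and positivity are immediate from $G\Delta G' = G'\Delta G$ and positivity of each $a^{-\psi(e)}$; the triangle inequality comes from the identity $G\Delta G'' = (G\Delta G')\,\Delta\,(G'\Delta G'')$ together with $A\Delta B\subseteq A\cup B$, giving the pointwise bound $\mathbf{1}_{e\in G\Delta G''}\le \mathbf{1}_{e\in G\Delta G'}+\mathbf{1}_{e\in G'\Delta G''}$; and translation invariance follows from $(G+H)\Delta(G'+H)=G\Delta G'$. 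To see that $d_{\varphi_a}$ metrizes the product topology (equivalently the stated coordinatewise convergence, whence all the $d_{\varphi_a}$ are topologically equivalent), I would use a tail estimate in both directions: coordinatewise convergence $G_n \to G$ forces eventual agreement on the finitely many edges with $\psi(e)\le N$, while the remaining tail $\sum_{\psi(e)>N} a^{-\psi(e)}$ is uniformly small, so $d_{\varphi_a}(G_n,G)\to 0$; conversely $a^{-\psi(e)}\mathbf{1}_{e\in G_n\Delta G}\le d_{\varphi_a}(G_n,G)$ forces coordinatewise agreement.

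Part (2) is handled by the cylinder-set description of the product topology: a basic open neighborhood of $G$ fixes the values on finitely many edges $e_1,\dots,e_k$, so the finite graph $G\cap\{e_1,\dots,e_k\}$ lies in this neighborhood and in $\G_0(V)$, proving $\G_0(V)$ dense; density of $\G_1(V)$ then follows because complementation $G\mapsto G+K_V$ is a homeomorphism (a translation) carrying $\G_0(V)$ onto $\G_1(V)$. For part (3) I would observe that $2\heart{G}{3}=\sum_{e\in G} 2\cdot 3^{-\psi(e)}$ is precisely a ternary expansion with digits in $\{0,2\}$, whose image is by definition the middle-thirds Cantor set $C$; surjectivity onto $C$ is the standard ternary characterization, and injectivity follows from the gap estimate that if two $\{0,2\}$-digit sequences first differ at position $n$ then their values differ by at least $2\cdot 3^{-n}-\sum_{m>n}2\cdot 3^{-m}=3^{-n}>0$. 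Continuity of $\heart{\cdot}{3}$ is the same tail argument as in part (1), and since $\G(V)$ is compact and $C\subseteq\R$ is Hausdorff, a continuous bijection is automatically a homeomorphism; this also exhibits $\G(V)$ as a compact metric space.

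Part (4), which I expect to be the main obstacle, rests on the non-uniqueness structure of binary expansions. Writing $\heart{G}{2}=\sum_{e\in G}2^{-\psi(e)}$ realizes $\heart{\cdot}{2}$ as binary-digit evaluation onto $[0,1]$, so surjectivity is just the existence of binary expansions. The delicate point is that two distinct digit sequences evaluate equally exactly at the dyadic rationals, where the only coincidence is between a terminating expansion (eventually all $0$, i.e.\ a finite graph in $\G_0(V)$) and one ending in all $1$'s (a co-finite graph in $\G_1(V)$). I would prove this by the same geometric-gap argument as in part (3): if the sequences first differ at position $n$ then $0=2^{-n}+\sum_{m>n}(b_m-c_m)2^{-m}\ge 2^{-n}-2^{-n}=0$, and equality forces one sequence to be eventually $0$ and the other eventually $1$. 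Consequently every collision involves an element of $\G_0(V)$, so the restriction of $\heart{\cdot}{2}$ to $\G(V)\setminus\G_0(V)$ is injective (a bijection onto its image). Finally, for a finite nonempty $G=\{e_{n_1},\dots,e_{n_k}\}$ with $n_1<\cdots<n_k$, the value $\heart{G}{2}$ is a dyadic rational in $(0,1)$, and its unique non-terminating expansion replaces the last edge $e_{n_k}$ (contributing $2^{-n_k}$) by the tail $\{e_m:m>n_k\}$ (contributing $\sum_{m>n_k}2^{-m}=2^{-n_k}$); this produces the co-finite graph $G'=(G\setminus\{e_{n_k}\})\cup\{e_m:m>n_k\}$ with $\heart{G'}{2}=\heart{G}{2}$, and its uniqueness is exactly the uniqueness of the non-terminating binary expansion.
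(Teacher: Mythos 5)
The paper itself contains no proof of this theorem: it is quoted from \cite{KR1} as background (``The following theorem collects together some of the topological results in \cite{KR1}''), so there is no internal argument to compare against. On its merits, your proof is correct and is the natural one---the identification of $\G(V)$ with $\{0,1\}^{\N}$ via $\psi$, with $\heart{\cdot}{a}$ read as base-$a$ digit evaluation, is exactly the viewpoint the paper itself exploits later (the binary-expansion facts invoked, via \cite{Cam}, in the proofs of Theorems \ref{T1} and \ref{Thaar}); your gap estimates for ternary $\{0,2\}$-sequences and for colliding binary expansions, the compact-to-Hausdorff upgrade of the continuous bijection in part (3), and the explicit finite/co-finite pairing $G' = (G \setminus \{e_{n_k}\}) \cup \{e_m : m > n_k\}$ in part (4) are all sound. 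Two small caveats. First, the theorem as printed says $a \geq 1$, whereas Definition \ref{D2} defines $d_{\varphi_a}$ only for $a > 1$, and your finiteness argument correctly needs $a > 1$: for $a = 1$ the sum is $|G \Delta G'|$, an extended metric whose topology is strictly finer than the product topology, so the ``$\geq$'' should be read as ``$>$'' and your silent restriction is the right call. Second, your equality-of-topologies argument in part (1) compares convergent sequences, which pins down the topology only because both candidate topologies are first countable (the product has countably many factors, hence is metrizable); a one-line remark to that effect---or a direct verification that every $d_{\varphi_a}$-ball contains a cylinder set around each of its points and conversely---would make that step airtight.
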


We now present the main results in this paper. Since $\G(V)$ is a compact
abelian group, it is natural to seek out its associated Haar measure. We
identify this measure in our first main result. We also show that the
Haar measure is intimately connected to the distinguished metric
$\heart{\cdot}{2}$ (for any $\psi$) that was used in \cite{KR1} to
develop a differential calculus on $\G(V)$. More precisely, the following
holds.

\begin{utheorem}\label{T1}
Fix a countably infinite set $V$. The Haar measure $\muhaar$ on $\G(V)$
is the unique probability measure $\mu_{1/2}$ induced from the
Bernoulli$(\frac{1}{2})$-measure on each factor $\{ 0, 1 \}$ of $\G(V)$.
Now given any bijection $\psi : K_V \to \N$, the Haar measure of any
open or closed $\heart{\cdot}{2}$-ball (in $\G(V)$) of radius $\epsilon
\in [0,1]$ is $\epsilon$.
\end{utheorem}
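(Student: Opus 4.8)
The plan is to prove the two assertions in turn, reducing the ball computation to the classical fact that an infinite sequence of fair coin-flips yields a real number uniformly distributed on $[0,1]$. For the first statement I would use that $\G(V) = (\Z/2\Z)^{K_V}$ is a product of copies of the compact group $\Z/2\Z$. Rather than quoting that Haar measure on a product is the product of the factor Haar measures, I prefer to verify directly that $\mu_{1/2}$ is translation-invariant: translation by a fixed $G_0$ acts on each coordinate $\{0,1\}$ either as the identity or as the transposition $0\leftrightarrow 1$, and both preserve the Bernoulli$(1/2)$-measure on that factor, so their product preserves $\mu_{1/2}$. Since $\G(V)$ is a compact group, its normalized Haar measure is the unique translation-invariant probability measure, forcing $\muhaar=\mu_{1/2}$.

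For the second statement, fix $\psi$ and identify $\G(V)$ with $\{0,1\}^{\N}$ via $G\mapsto(x_n)_n$, where $x_n=\mathbf 1_{\psi^{-1}(n)\in G}$; under this identification $\heart{G}{2}=\sum_{n\ge 1}x_n2^{-n}=:\Phi((x_n))$ is exactly the base-$2$ expansion map onto $[0,1]$. Because $d_{\varphi_2}$ is translation-invariant and $\G(V)$ is $2$-torsion, the ball of radius $\epsilon$ about any $G_0$ equals the translate $G_0+B(\mathbf 0,\epsilon)$ of the ball about the empty graph, so by translation-invariance of $\muhaar$ it suffices to treat $B(\mathbf 0,\epsilon)=\{G:\heart{G}{2}<\epsilon\}=\Phi^{-1}([0,\epsilon))$ and its closed analogue $\Phi^{-1}([0,\epsilon])$. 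Everything now rests on showing $\Phi_*\muhaar=\mathrm{Leb}$ on $[0,1]$.

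To prove $\Phi_*\muhaar=\mathrm{Leb}$ I would compare the two probability measures on the $\pi$-system of half-open dyadic intervals $I_{n,k}=[k2^{-n},(k+1)2^{-n})$, which generates the Borel $\sigma$-algebra. The cylinder $C_{n,k}$ of sequences whose first $n$ bits spell out $k$ has $\muhaar(C_{n,k})=2^{-n}$, and $\Phi$ maps $C_{n,k}$ onto the closed interval $[k2^{-n},(k+1)2^{-n}]$. The difference between $\Phi^{-1}(I_{n,k})$ and $C_{n,k}$ consists only of endpoint sequences that are eventually $0$ or eventually $1$, i.e. elements of $\G_0(V)\cup\G_1(V)$ --- this is precisely the non-injectivity of $\heart{\cdot}{2}$ recorded in Theorem~\ref{Theart2}(4), and it is the one place requiring genuine care. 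Since $\G_0(V)\cup\G_1(V)$ is countable and therefore $\muhaar$-null, we get $\Phi_*\muhaar(I_{n,k})=\muhaar(C_{n,k})=2^{-n}=\mathrm{Leb}(I_{n,k})$; two probability measures agreeing on a generating $\pi$-system are equal by Dynkin's lemma, so $\Phi_*\muhaar=\mathrm{Leb}$. It follows that $\muhaar(B(\mathbf 0,\epsilon))=\mathrm{Leb}([0,\epsilon))=\epsilon$, with the identical computation for closed balls. The main obstacle is thus the dyadic double-counting at interval endpoints; once it is dispatched as a null-set correction, the result is immediate.
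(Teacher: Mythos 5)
Your proposal is correct, but it takes a genuinely different route from the paper's, most notably in the second assertion. For the identification $\muhaar = \mu_{1/2}$, the paper starts from Weil's existence/uniqueness theorem and \emph{computes} the Haar measure on the cylinder sets: for finite $I = I_0 \coprod I_1$, translation invariance gives $\muhaar(\E(I_0,I_1)) = \muhaar(\E(I,\emptyset))$, and the partition $\G(V) = \coprod_{I_0 \subset I} \E(I_0, I \setminus I_0)$ forces the common value $2^{-|I|}$; it then invokes the $\pi$-system uniqueness lemma together with Theorem \ref{Tsigma}. You argue in the opposite direction, verifying directly that $\mu_{1/2}$ is translation-invariant and appealing to uniqueness of the Haar measure; both are valid and of comparable length. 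For the ball computation the divergence is substantial: the paper proceeds by induction on dyadic radii $\epsilon = 2^{-n_1} + \cdots + 2^{-n_k}$, explicitly decomposing the open ball into a smaller ball, a two-point set $S'$, and a set $S''$ which is an $\E$-set minus a point of measure $2^{-n_k}$, and then squeezes a general $\epsilon$ between dyadic approximations from below and above; you instead prove the pushforward identity $(\heart{\cdot}{2})_*\muhaar = \mu_\R$ on $[0,1]$ once and for all (dyadic half-open intervals form a generating $\pi$-system, and the mismatch between $\Phi^{-1}(I_{n,k})$ and the cylinder $C_{n,k}$ consists of at most two eventually-constant sequences, hence is null), and then read off the measure of every ball immediately. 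In effect you prove part (1) of Theorem \ref{Thaar} first --- which the paper establishes separately, later, by essentially your argument --- and derive the ball statement as a corollary; this buys brevity and eliminates the induction, while the paper's choice keeps Theorem \ref{T1} self-contained inside graph space and defers the measure-preserving statement to its own theorem. Two small points would make your write-up airtight: justify that countable sets are $\muhaar$-null (translation invariance forces all singletons to have equal measure, which must be $0$ since $\muhaar$ is finite and $\G(V)$ is infinite --- the paper states this explicitly); and note that the paper defines the closed ball as the \emph{closure} of the open ball, which a priori could be smaller than $\{ G : \heart{G}{2} \leq \epsilon \} = \Phi^{-1}([0,\epsilon])$, so one should conclude via the squeeze $B({\bf 0},\epsilon,\heart{\cdot}{2}) \subset \overline{B}({\bf 0},\epsilon,\heart{\cdot}{2}) \subset \Phi^{-1}([0,\epsilon])$, whose two extreme sets both have measure $\epsilon$.
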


Note here that the Borel $\sigma$-algebra $\B_{\G(V)}$ of $\G(V)$, as well as
the Haar measure $\mu_{1/2}$, do not depend on the choice of labelling
$\psi : K_V \to \N$.

%\begin{remark}
%The Borel $\sigma$-algebra and the Haar measure on it, are one of several
%examples of $\sigma$-algebras and probability measures that can be
%defined on graph space $\G(V)$. Indeed, in Section \ref{Smeas} we
%introduce and study large classes of probability measures on graph space.
%Then following Theorem \ref{T1}, we present several examples in Section
%\ref{Scomput}, in which expectations of functions are computed with
%respect to the aforementioned classes of probability measures, including
%certain infinite versions of the well-known Erd\"os-R\'enyi graph models
%(defined below).
%\end{remark}

It is natural to ask if the measure space $\G(V)$ with its Borel
$\sigma$-algebra, can be modelled by a more familiar probability space.
(This is akin to Theorem \ref{Theart2}, which provided familiar
topological models for graph space.) Note moreover that the last
assertion in Theorem \ref{T1} has an obvious analogue for the usual
Lebesgue measure, which is in fact the Haar measure on the real line. It
is now natural to ask if the two Haar measures are related. The following
result answers both of these questions, and shows how to transfer
integration from $\G(V)$ to $\R$.

\begin{utheorem}\label{Thaar}
Fix a bijection $\psi : K_V \to \N$.
\begin{enumerate}
\item The map $\heart{\cdot}{2} : \G(V) \to [0,1]$ -- or to the circle
$S^1 = \R / \Z$ -- is a measurable, Haar measure-preserving map that is a
bijection outside the countable (measure zero) sets $\G_0(V), \G_1(V)$.

\item Suppose $f : [0,1] \to [-\infty, \infty]$ is Lebesgue integrable.
Then,
\[ \be_{\muhaar}[f(\heart{\cdot}{2})] = \int_0^1 f(x)\ dx. \]
Conversely, for all integrable $g : \G(V) \to [-\infty, \infty]$, we
have
\[ \be_{\muhaar}[g] = \int_0^1 g( (\heart{\cdot}{2})^{-1}(x))\ dx. \]
\end{enumerate}
\end{utheorem}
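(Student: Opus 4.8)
The plan is to recognize $\heart{\cdot}{2}$ as the binary-expansion map and to reduce everything to the single pushforward identity $(\heart{\cdot}{2})_*\muhaar = \mathrm{Leb}_{[0,1]}$; the integral formulas in part (2) will then be the standard change-of-variables statement for a measure-preserving map, obtained by the usual bootstrapping. Concretely, enumerating edges by $\psi$ and setting $b_n := \mathbf 1_{\psi^{-1}(n)\in G}$ gives $\heart{G}{2} = \sum_{n\ge 1} b_n 2^{-n}$, the real number in $[0,1]$ with binary digits $b_1 b_2 \cdots$. First I would dispose of the structural claims in part (1). Measurability is immediate: since $\heart{\cdot}{2} = d_{\varphi_2}(\mathbf 0,-)$ is the distance-to-origin function of a metric generating the product topology (Theorem \ref{Theart2}(1)), it is continuous, hence Borel measurable. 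The bijectivity claim is exactly Theorem \ref{Theart2}(4): the map is injective off $\G_0(V)$, and the only coincidences are the pairs $(G,G')$ with $G$ finite nonempty and $G'$ cofinite; removing both $\G_0(V)$ and $\G_1(V)$ therefore leaves a bijection from the infinite graphs with infinite complement onto $[0,1]$ minus the dyadic rationals, and both excluded sets are countable, hence $\muhaar$-null by Theorem \ref{T1}.

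The core of part (1) is that $\heart{\cdot}{2}$ is measure-preserving, and here I would invoke Theorem \ref{T1} directly. For each $\epsilon \in [0,1]$ the preimage $(\heart{\cdot}{2})^{-1}([0,\epsilon]) = \{G : \heart{G}{2} \le \epsilon\}$ is precisely the closed $\heart{\cdot}{2}$-ball of radius $\epsilon$ centered at $\mathbf 0$, whose Haar measure is $\epsilon$ by the last assertion of Theorem \ref{T1}. Thus $(\heart{\cdot}{2})_*\muhaar$ and Lebesgue measure agree on every interval $[0,\epsilon]$. Since the family $\{[0,\epsilon] : \epsilon \in [0,1]\}$ is closed under intersection (a $\pi$-system), contains $[0,1]$, and generates the Borel $\sigma$-algebra of $[0,1]$, and since both measures are probability measures, Dynkin's $\pi$--$\lambda$ theorem forces them to coincide on all Borel sets. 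Composing with the quotient $[0,1]\to\R/\Z$ yields the circle version, Lebesgue measure on $[0,1)$ being the normalized Haar measure on $S^1$.

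With the pushforward identity in hand, the forward formula in part (2) is routine. For $f = \mathbf 1_E$ it reads $\be_{\muhaar}[\mathbf 1_E(\heart{\cdot}{2})] = \muhaar((\heart{\cdot}{2})^{-1}E) = ((\heart{\cdot}{2})_*\muhaar)(E) = \int_0^1 \mathbf 1_E(x)\,dx$; extending by linearity to simple functions, by monotone convergence to nonnegative measurable $f$, and by the decomposition $f = f^+ - f^-$ to integrable $f$, gives $\be_{\muhaar}[f(\heart{\cdot}{2})] = \int_0^1 f(x)\,dx$ for every Lebesgue-integrable $f$.

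For the converse I would transport the statement through the a.e.-defined inverse. Write $A := \G(V)\setminus(\G_0(V)\cup\G_1(V))$ and $B := [0,1]\setminus D$, where $D$ is the countable null set of dyadic rationals; by part (1), $\heart{\cdot}{2}$ restricts to a continuous bijection $A\to B$. The one genuinely delicate point, and the main obstacle, is to certify that the inverse $S := (\heart{\cdot}{2})^{-1} : B \to A$ is measurable, so that $g\circ S$ is a legitimate integrand; this I would obtain from the Lusin--Souslin theorem, that a continuous injection between Borel subsets of Polish spaces has Borel image and Borel inverse. Granting this, $A$ and $B$ carry full measure, so the measure-preserving identity of part (1) transfers to $S_*(\mathrm{Leb}_{[0,1]}) = \muhaar$, and the same four-step bootstrapping applied to $S$ yields $\be_{\muhaar}[g] = \int_0^1 g(S(x))\,dx = \int_0^1 g((\heart{\cdot}{2})^{-1}(x))\,dx$ for every $\muhaar$-integrable $g$. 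Throughout, the only care required is bookkeeping of the null sets $\G_0(V), \G_1(V)$ and $D$, which never affect the integrals.
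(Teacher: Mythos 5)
Your proof is correct, and it shares the paper's overall skeleton --- establish the pushforward identity $(\heart{\cdot}{2})_*\muhaar = \mu_\R$ via a uniqueness-of-measures argument on a $\pi$-system, then bootstrap through indicators, simple functions, monotone convergence, and $f = f^+ - f^-$ --- but the implementation of the key step differs. The paper compares the two measures on the cylinder sets $\E(I_0,I_1)$ (finite disjoint $I_0, I_1$) inside $\G(V)$, computing $\mu_\R(\heart{\E(I_0,I_1)}{2}) = 2^{-|I_0 \cup I_1|}$ directly from the binary-expansion/coin-toss correspondence of \cite{Cam}, and then invokes Proposition \ref{Pjp} together with Theorem \ref{Tsigma}. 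You instead compare on the $\pi$-system of intervals $[0,\epsilon] \subset [0,1]$, feeding in the second assertion of Theorem \ref{T1} (the Haar measure of an $\epsilon$-ball is $\epsilon$); this is more economical, since it reuses a computation the paper has already carried out rather than redoing it on cylinders. One small inexactness: the paper defines $\overline{B}({\bf 0},\epsilon,\heart{\cdot}{2})$ as the \emph{closure} of the open ball, which can be strictly smaller than $\{ G : \heart{G}{2} \leq \epsilon \}$ (for $\epsilon = 1/2$, the one-edge graph $\{ \psi^{-1}(1) \}$ lies in the preimage of $[0,1/2]$ but not in the closure of the open ball), so the preimage of $[0,\epsilon]$ is not literally a closed ball. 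This costs nothing: the preimage is the open ball together with the level set $\{ G : \heart{G}{2} = \epsilon \}$, which has at most two elements and hence Haar measure zero, so the pushforward still assigns $[0,\epsilon]$ the measure $\epsilon$.

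On the converse formula in part (2), your treatment is in fact more explicit than the paper's: the paper proves only the forward formula in the body of its proof and relegates the meaning and measurability of $(\heart{\cdot}{2})^{-1}$ to Remark \ref{R3}, whereas you certify measurability of the inverse via the Lusin--Souslin theorem. That application is legitimate ($\G(V)$ is compact metrizable, hence Polish, and the restricted map is a continuous injection between Borel subsets). A more elementary route, staying within the paper's toolkit, is to note that for each edge $e$ the coordinate $x \mapsto {\bf 1}_{e \in (\heart{\cdot}{2})^{-1}(x)}$ is the $\psi(e)$-th binary digit of $x$, a step function on dyadic intervals, hence Borel; since $\Sigma_{\rm meas} = \B_{\G(V)}$ by Theorem \ref{Tsigma}, coordinate-wise measurability already makes the inverse measurable.
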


\noindent Thus, Haar integration can be carried out on labelled graph
space by transferring the classical Lebesgue theory from the unit
interval (or the circle) to $\G(V)$.

The remaining sections are devoted to proving the above results. We add
moreover that additional results concerning Fourier analysis, the
Pontryagin dual, and positive definite functions for $\G(V)$ are
shown in Section \ref{Spontryagin} below.
%}}}

\section{Measures on graph space}\label{Smeas}

In this section we develop the necessary tools required to show Theorem
\ref{T1}. We begin by studying several $\sigma$-algebras on graph space
and showing how they are related. We then study probability measures on
$\G(V)$ and prove Theorem \ref{T1}.

%{{{1 Section 2.1 - $\sigma$-algebras on graph space
\subsection{$\sigma$-algebras on graph space}

We begin with an arbitrary (fixed) labelled index set $V$
of vertices. Let $V(e)$ denote the vertices attached to an edge $e \in
K_V$; then $\G(V) = \times_{e \in K_V} \calp(K_{V(e)})$ is the Cartesian
product of power sets, and each set is a $\sigma$-algebra of size 2.
Define the product $\sigma$-algebra $\Sigma_{\rm meas}$ on $\G(V)$ to be
the $\sigma$-algebra generated by the cylinder sets
\begin{equation}\label{Eproduct}
S_{e_0} := \times_{e \neq e_0} \{ \emptyset, \{ e \} \} \times \{ e_0
\}, \qquad e_0 \in K_V.
\end{equation}

We now define several other $\sigma$-algebras on $\G(V)$, as well as a
closely related family of sets.

\begin{definition}
Given disjoint subsets $I_0, I_1 \subset K_V$, define
\begin{equation}
\E(I_0, I_1) := \{ G \in \G(V) : I_1 \subset G,\ I_0 \subset K_V
\setminus G \}.
\end{equation}

\noindent Now define the following $\sigma$-algebras on $\G(V)$:
\begin{itemize}
\item $\B_{\G(V)}$ is the \textit{Borel $\sigma$-algebra}, generated by
all open sets.

\item $\Sigma_0$ is the $\sigma$-algebra generated by all compact sets.

\item $\Sigma_\E$ is the $\sigma$-algebra generated by all sets $\E(I_0,
I_1)$ for disjoint $I_0, I_1 \subset K_V$.

\item $\Sigma_{\E,0}$ is the $\sigma$-algebra generated by all $\E(I_0,
I_1)$ for finite (or countable) disjoint $I_0, I_1 \subset K_V$.
\end{itemize}
\end{definition}

Consider the case when $V$, and hence $K_V$, is countable. In this case,
as shown in \cite{KR1}, the product topology on the compact space $\G(V)$
can be metrized; this yields another candidate $\sigma$-algebra, as
described presently. The main result of this subsection relates all of
the above $\sigma$-algebras.

\begin{theorem}\label{Tsigma}
Suppose $V$ is countable, and $d$ is a metric on $\G(V)$ that metrizes
the product topology. Define $\Sigma_{d,ball}$ to be the $\sigma$-algebra
generated by the open (or closed) $d$-balls in $\G(V)$. Then,
\[ \Sigma_{\rm meas} = \Sigma_{\E,0} = \Sigma_\E = \Sigma_{d,ball} =
\B_{\G(V)} = \Sigma_0. \]
\end{theorem}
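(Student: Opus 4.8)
The plan is to prove the chain of equalities
\[
\Sigma_{\rm meas} = \Sigma_{\E,0} = \Sigma_\E = \Sigma_{d,ball} = \B_{\G(V)} = \Sigma_0
\]
by establishing a cycle of inclusions among these six $\sigma$-algebras. Since each is generated by an explicit family of sets, the natural strategy throughout is: to show $\Sigma_A \subseteq \Sigma_B$, it suffices to verify that each generator of $\Sigma_A$ lies in $\Sigma_B$. I would not prove all fifteen pairwise inclusions directly, but instead arrange a convenient cycle, for example $\Sigma_{\rm meas} \subseteq \Sigma_{\E,0} \subseteq \Sigma_\E \subseteq \B_{\G(V)} \subseteq \Sigma_0 \subseteq \Sigma_{d,ball} \subseteq \Sigma_{\rm meas}$, inserting the remaining equality $\Sigma_{\E,0} = \Sigma_\E$ as a two-sided argument.

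First I would handle the easy links. Each cylinder generator $S_{e_0}$ of $\Sigma_{\rm meas}$ equals $\E(\emptyset, \{e_0\})$, so $\Sigma_{\rm meas} \subseteq \Sigma_{\E,0}$; the reverse will follow at the end of the cycle. The inclusion $\Sigma_{\E,0} \subseteq \Sigma_\E$ is immediate since finite/countable pairs are a subfamily of all disjoint pairs. For $\Sigma_\E \subseteq \B_{\G(V)}$, I would observe that each $\E(I_0,I_1)$ is \emph{closed} in the product topology: it is an intersection of the closed sets $\{G : e \in G\}$ (for $e \in I_1$) and $\{G : e \notin G\}$ (for $e \in I_0$), each being a preimage of a point under a continuous coordinate projection into the discrete space $\{0,1\}$. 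Since closed sets are Borel, this gives the inclusion. The equality $\B_{\G(V)} = \Sigma_0$ is standard for compact metrizable (hence second countable, hence the topology admits a countable base of open sets whose closures generate the same $\sigma$-algebra) spaces; here $\G(V)$ is compact metrizable by Theorem~\ref{Theart2}(3), and in a compact Hausdorff space every closed set is compact, so the $\sigma$-algebra generated by compact sets contains all closed sets and hence all Borel sets, while compact sets are closed and hence Borel, giving both inclusions.

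The link $\Sigma_0 \subseteq \Sigma_{d,ball}$ and $\Sigma_{d,ball} \subseteq \Sigma_{\rm meas}$ require the countability and metrizability hypotheses. For the first, since $\G(V)$ is second countable, every open set is a countable union of $d$-balls, so every open set (and hence, by taking complements and generating, every Borel and every compact set) lies in $\Sigma_{d,ball}$; combined with $\Sigma_0 = \B_{\G(V)}$ this gives $\Sigma_0 \subseteq \Sigma_{d,ball}$. The genuinely pivotal step, and the one I expect to be the main obstacle, is $\Sigma_{d,ball} \subseteq \Sigma_{\rm meas}$: I must show each open $d$-ball is $\Sigma_{\rm meas}$-measurable. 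The key realization is that since $V$ is countable, $K_V$ is countable, so $\G(V) = \{0,1\}^{K_V}$ is a countable product, and the single-coordinate evaluation maps $G \mapsto {\bf 1}_{e \in G}$ are $\Sigma_{\rm meas}$-measurable by definition of the product $\sigma$-algebra. Because $d$ metrizes the product topology and each coordinate projection is $\Sigma_{\rm meas}$-measurable, the metric $d(G, G')$ for fixed $G'$ is a measurable function of $G$ (it is a limit/sum of measurable coordinate functions, as the explicit formula for $d_{\varphi_a}$ makes clear), whence each ball $\{G : d(G,G_0) < \epsilon\}$ is the preimage of an interval under a $\Sigma_{\rm meas}$-measurable map and thus lies in $\Sigma_{\rm meas}$. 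This closes the cycle. Finally, I would dispatch $\Sigma_\E \subseteq \Sigma_{\E,0}$ separately: an arbitrary $\E(I_0,I_1)$ with possibly uncountable $I_0, I_1$ must be expressed using only countable data, which works because $K_V$ is countable under our hypothesis, so $I_0, I_1$ are automatically countable and $\E(I_0,I_1)$ is already a generator of $\Sigma_{\E,0}$ — making this inclusion, in the countable setting, essentially automatic.
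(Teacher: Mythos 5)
Most of your cycle is sound and agrees with the paper's own steps: the identification $S_{e_0} = \E(\emptyset,\{e_0\})$, the closedness of each $\E(I_0,I_1)$ in the product topology, the equality $\B_{\G(V)} = \Sigma_0$ from compact Hausdorffness, the equality $\B_{\G(V)} = \Sigma_{d,ball}$ from separability, and the observation that $\Sigma_\E = \Sigma_{\E,0}$ is automatic when $K_V$ is countable. The problem is precisely the link you single out as pivotal, $\Sigma_{d,ball} \subseteq \Sigma_{\rm meas}$. Your justification that $G \mapsto d(G,G_0)$ is $\Sigma_{\rm meas}$-measurable appeals to ``the explicit formula for $d_{\varphi_a}$'', but the theorem quantifies over an \emph{arbitrary} metric $d$ metrizing the product topology, and such a $d$ need not be a sum of coordinate functions; there is no formula to invoke. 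The only general reason a continuous function such as $d(\cdot,G_0)$ is $\Sigma_{\rm meas}$-measurable is the inclusion $\B_{\G(V)} \subseteq \Sigma_{\rm meas}$ --- which is exactly what your cycle is supposed to deliver. As written, the argument is circular at this one step, and it is the step carrying all the content.

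The gap is fixable in two ways, and the second makes the detour through $d$ unnecessary. (i) Truncation: set $G_n := G \cap E_n(\psi)$ and $h_n(G) := d(G_n, G_0)$; each $h_n$ depends on finitely many coordinates (it is constant on the $2^n$ cylinders $\E(I_0,I_1)$ with $I_0 \coprod I_1 = E_n(\psi)$), hence is $\Sigma_{\rm meas}$-measurable, and $h_n(G) \to d(G,G_0)$ pointwise because $G_n \to G$ in the product topology and $d(\cdot,G_0)$ is continuous; so $d(\cdot,G_0)$ is a pointwise limit of measurable functions. (ii) More directly, the sets $\E(I_0,I_1)$ with $I_0 \coprod I_1$ finite form a \emph{countable} base of the product topology, so every open set is a countable union of sets already in $\Sigma_{\rm meas}$, giving $\B_{\G(V)} \subseteq \Sigma_{\rm meas}$ in one stroke. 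The paper's own route is a concrete cousin of (ii): it proves $\B_{\G(V)} \subseteq \Sigma_\E$ by covering any open $\U$, outside the countable set $\U \cap \G_1(V)$, by countably many balls $B(G_0, 2^{-n}, \heart{\cdot}{2})$ with finite-graph centers, each of which lies in $\Sigma_\E$ by Proposition \ref{P4}. Once repaired by (i) or (ii), your architecture is a legitimate, somewhat more abstract alternative to the paper's explicit ball decomposition.
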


In order to prove Theorem \ref{Tsigma}, some preliminary results are
needed. The first result collects some basic facts about the sets
$\E(I_0, I_1)$.%, and is not hard to show.

\begin{lemma}\label{L4}
Suppose $V$ is arbitrary and $I_0, I_1, J_0, J_1 \subset K_V$ are all
disjoint.
\begin{enumerate}
\item Then one has:
\begin{equation}\label{Emeasure6}
\E(I_0 \cup J_0, I_1 \cup J_1) = \E(I_0, J_0) \cap \E(I_1, J_1).
\end{equation}

\noindent In particular, $\E(-,-)$ is inclusion-reversing in each
argument.

\item Given $S \subset \G(V) \ni G$, define $S + G := \{ G' + G : G' \in
S \}$. Then,
\begin{equation}\label{Emeasure1}
\E(I_0, I_1) = \E(I_0 \cup I_1, \emptyset) + I_1,
\end{equation}

\noindent where for every $I \subset K_V$, $\E(I, \emptyset)$ is an ideal
of (the $\Z / 2\Z$-algebra) $\G(V)$.

\item Given any $G \in \G(V)$,
\begin{equation}\label{Emeasure2}
\E(I_0, I_1) + G = \E \left( (I_0 \setminus G) \coprod (I_1 \cap G), (I_1
\setminus G) \coprod (I_0 \cap G) \right).
\end{equation}

\item Given disjoint sets $I_0, I_1 \subset K_V$, the set $\E(I_0, I_1)$
is closed in $K_V$. It is open if and only if $I_0 \coprod I_1$ is
finite.
\end{enumerate}
\end{lemma}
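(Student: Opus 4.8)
The plan is to reduce every part to the elementary membership characterization $G \in \E(I_0, I_1) \iff I_1 \subseteq G \text{ and } G \cap I_0 = \emptyset$, which is immediate from the definition once each graph is identified with its edge set. For part (1), I would argue by direct membership: a graph $G$ lies in the left-hand set iff $I_1 \cup J_1 \subseteq G$ and $G \cap (I_0 \cup J_0) = \emptyset$, and each union condition splits into the conjunction of its two single-set conditions, which is exactly membership in the intersection on the right. The inclusion-reversing property then follows by specializing one pair of arguments to $\emptyset$ and using $\E(\emptyset, \emptyset) = \G(V)$: enlarging $I_0$ or $I_1$ only adds constraints and hence shrinks $\E(I_0, I_1)$.

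For part (4), I would write $\E(I_0, I_1) = \bigcap_{e \in I_1} \{ G : e \in G \} \cap \bigcap_{e \in I_0} \{ G : e \notin G \}$. Each set on the right is a cylinder constraining a single coordinate of $\{0,1\}^{K_V}$, hence clopen, and an arbitrary intersection of closed sets is closed, giving the first claim. If $I_0 \coprod I_1$ is finite the intersection is finite and therefore open as well. For the converse I would fix a point $G_0 \in \E(I_0, I_1)$ (nonempty since $I_1 \in \E(I_0, I_1)$) and use that any basic open neighborhood of $G_0$ constrains only finitely many coordinates; were $I_0 \coprod I_1$ infinite, such a neighborhood would necessarily contain a graph violating one of the infinitely many defining constraints, contradicting openness.

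Part (2) has two components. The translation identity I would obtain from the involution $G' \mapsto G' + I_1$, which is well-defined and its own inverse by $2$-torsion: if $G'$ avoids $I_0 \cup I_1$, then $G' + I_1 = G' \cup I_1$ contains $I_1$ and, since $I_1 \cap I_0 = \emptyset$, still avoids $I_0$, so it lands in $\E(I_0, I_1)$; adding $I_1$ back is the inverse map and recovers an element of $\E(I_0 \cup I_1, \emptyset)$, so the map is a bijection. That $\E(I, \emptyset) = \{ G : G \cap I = \emptyset \}$ is an ideal I would verify directly: it contains ${\bf 0}$ and is closed under symmetric difference (hence an additive subgroup), and since multiplication in the $\Z / 2\Z$-algebra is pointwise, i.e.\ $G \cdot H = G \cap H$, intersecting with $I$ remains empty, giving the absorption property.

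Finally, part (3) is the one I expect to be the main obstacle, being the only genuinely bookkeeping-heavy computation. I would approach it via the characterization applied to $H + G$: using $H \in \E(I_0, I_1) + G \iff H + G \in \E(I_0, I_1)$, the conditions $I_1 \subseteq H + G$ and $(H + G) \cap I_0 = \emptyset$ must be translated into conditions on $H$ edge by edge. Translation by $G$ flips exactly the edges lying in $G$, so an edge of $I_1$ inside $G$ becomes a \emph{forced-absent} constraint on $H$ while an edge of $I_1$ outside $G$ stays forced-present, and symmetrically for $I_0$; edges outside $I_0 \cup I_1$ are unconstrained and may be ignored. Collecting the forced-present edges as $(I_1 \setminus G) \coprod (I_0 \cap G)$ and the forced-absent edges as $(I_0 \setminus G) \coprod (I_1 \cap G)$ yields the stated formula. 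The delicate point is checking that these four pieces are genuinely disjoint and no constraint is double-counted, which follows from the disjointness of $I_0$ and $I_1$ together with that of $G$ and its complement.
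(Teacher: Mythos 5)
Your proposal is correct and takes essentially the same route as the paper: parts (1)--(3) are exactly the direct definitional verifications the paper dismisses as ``easy,'' and for part (4) your argument (intersection of single-coordinate clopen cylinders for closedness, then flipping one constrained edge outside the finitely many coordinates fixed by a basic neighborhood for non-openness) is the same one-edge perturbation idea as the paper's, which instead exhibits the sequence $G_n = I_1 \Delta \{ i_n \} \notin \E(I_0, I_1)$ converging to $I_1$ to show the complement is not closed. One remark: your membership-splitting argument in (1) in fact proves the intended identity $\E(I_0 \cup J_0, I_1 \cup J_1) = \E(I_0, I_1) \cap \E(J_0, J_1)$, which is the version the paper actually uses later (e.g.\ for closure under finite intersections); the pairing $\E(I_0, J_0) \cap \E(I_1, J_1)$ printed in the statement is an index typo, and your proof silently and correctly repairs it.
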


%%{\color{red}
\begin{proof}
All but the last part are easy to prove using the definitions. For the
last part, note that for finite disjoint $I_0, I_1 \subset K_V$, $\E(I_0,
I_1)$ is closed as well as open in $\G(V)$. Hence $\E(I_0, I_1) \subset
\G(V)$ is closed for all disjoint $I_0, I_1 \subset K_V$, by using
Equation \eqref{Emeasure6}. Finally, suppose that $I_0 \cup I_1$ is
infinite; the goal is now to prove that its complement in $(\Z / 2
\Z)^{K_V}$ is not closed in the product topology.
To do so, it suffices to produce a sequence $G_n \notin \E(I_0, I_1)$,
that converges to a graph $G_0 \in \E(I_0, I_1)$. Thus, fix a countable
subset $\{ i_n : n \in \N \} \subset I_0 \cup I_1$, and define
$G_n := I_1 \Delta \{ i_n \}, \ G_0 := I_1 \in \E(I_0, I_1)$.
It is easy to check that $G_n \to G_0$ in $\G(V)$, and that this sequence
satisfies the desired properties.
\end{proof}
%%}

In order to state and prove the next result, the following notation is
required.

\begin{definition}\label{Dnpsi}
Suppose $V$ is countable. Fix a bijection $\psi : K_V \to \N$ and define
$E_n(\psi) := \{ e \in K_V : \psi(e) \leq n \}$. Given $a>1, \epsilon
\geq 0$, and $G \in \G(V)$, define $B(G,\epsilon,\heart{\cdot}{a})$ to be
the open ball in $(\G(V), d_{\varphi_a})$ with center $G$ and radius
$\epsilon$, and $\overline{B}(G,\epsilon,\heart{\cdot}{2})$ to be its
closure in $(\G(V), d_{\varphi_a})$.
\end{definition}

The last preliminary result shows that the sets $\E(I_0,I_1)$ lie in the
Borel $\sigma$-algebra for finite $I_0, I_1$.

\begin{prop}\label{P4}
Suppose $V$ is countable. Fix a bijection $\psi : K_V \to \N$, and given
disjoint subsets $I_0, I_1 \subset \N$, define $\E(I_0, I_1) :=
\E(\psi^{-1}(I_0), \psi^{-1}(I_1))$.
\begin{enumerate}
\item If $I_0 \coprod I_1 = \{ 1, \dots, n \}$ for some $n$, then
\begin{eqnarray*}
\E(I_0, I_1) & = & B(\psi^{-1}(I_1), 2^{-n}, \heart{\cdot}{2}) \coprod \{
K_V \setminus \psi^{-1}(I_0) \}\\
& = & B(\psi^{-1}(I_1), 2^{-n}, \heart{\cdot}{2}) \bigcup B(K_V \setminus
\psi^{-1}(I_0), 2^{-n}, \heart{\cdot}{2})\\
& = & \overline{B}(\psi^{-1}(I_1), 2^{-n-1}, \heart{\cdot}{2}) \bigcup
\overline{B}(K_V \setminus \psi^{-1}(I_0), 2^{-n-1}, \heart{\cdot}{2}).
\end{eqnarray*}

\item For all finite disjoint $I_0, I_1$, $\E(I_0, I_1)$ is a finite
union of open balls, as well as closed balls. Alternatively, it can be
partitioned into finitely many open balls and a finite set.
\end{enumerate}
\end{prop}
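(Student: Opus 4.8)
The plan is to work coordinatewise, identifying each graph $G$ with its indicator sequence $(b_k)_{k\ge 1}$, where $b_k = 1$ iff $\psi^{-1}(k)\in G$. Under this identification $\heart{G}{2} = \sum_k b_k 2^{-k}$ and $d_{\varphi_2}(G,G') = \sum_{k:\,b_k\neq b'_k} 2^{-k}$, while $\E(I_0,I_1)$ is precisely the set of sequences with $b_k=1$ on $I_1$ and $b_k=0$ on $I_0$. The workhorse is an elementary ``first-disagreement'' estimate for this dyadic metric: if two sequences agree on $\{1,\dots,m\}$, then their distance is $\le 2^{-m}$, with equality exactly when they disagree at every coordinate $>m$; and if they disagree somewhere in $\{1,\dots,m\}$, then their distance is $\ge 2^{-m}$. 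I would prove this estimate first, since everything else follows by bookkeeping.

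For part (1) I would apply the estimate to the center $\psi^{-1}(I_1)$, whose sequence is the $I_1$-pattern on $\{1,\dots,n\}$ followed by zeros. At radius $2^{-n}$ the estimate says $G$ lies in the open ball iff $G$ agrees with this pattern on $\{1,\dots,n\}$ --- i.e. $G\in\E(I_0,I_1)$ --- and does \emph{not} have all later coordinates equal to $1$. The unique excluded graph has the $I_1$-pattern on $\{1,\dots,n\}$ and all ones afterwards, which, since $E_n(\psi)=\psi^{-1}(I_0)\sqcup\psi^{-1}(I_1)$, is exactly $K_V\setminus\psi^{-1}(I_0)$; this yields the first (disjoint-union) equality. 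Running the same computation from the center $K_V\setminus\psi^{-1}(I_0)$ shows its open ball of radius $2^{-n}$ omits only $\psi^{-1}(I_1)$, and since $K_V$ is infinite $\E(I_0,I_1)$ has more than these two points, so the two balls cover it, giving the second equality. For the third equality I would run the open-ball computation at radius $2^{-n-1}$ (with $m=n+1$): the two open balls are the cylinders $\E(I_0\cup\{n+1\},I_1)$ and $\E(I_0,I_1\cup\{n+1\})$ each with one boundary point deleted, and since these cylinders are clopen by Lemma \ref{L4}(4) and the deleted point is a limit of ball elements (flip one far coordinate), passing to closures restores exactly these two cylinders; their union over the two values of $b_{n+1}$ is $\E(I_0,I_1)$.

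For part (2), set $n=\max(I_0\cup I_1)$ and let $J=\{1,\dots,n\}\setminus(I_0\cup I_1)$ be the finitely many uncovered indices. Partitioning $\E(I_0,I_1)$ according to the configuration of $G$ on $J$ gives the disjoint decomposition $\E(I_0,I_1)=\coprod_{J=J_0\sqcup J_1}\E(I_0\cup J_0,\,I_1\cup J_1)$ over the $2^{|J|}$ splittings of $J$, and each summand now has index set $\{1,\dots,n\}$, so part (1) applies. Hence $\E(I_0,I_1)$ is a finite (disjoint) union of sets each being a union of two open balls, and likewise of two closed balls; and since each summand is a single open ball together with one point, the whole set is a disjoint union of finitely many open balls and a finite set.

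I expect the only real obstacle to be the equality case in part (1): pinning down that the open ball of radius exactly $2^{-n}$ excludes \emph{precisely} the one ``maximal'' graph $K_V\setminus\psi^{-1}(I_0)$, neither more nor fewer points. This is where the non-ultrametric, $\ell^1$-type nature of $d_{\varphi_2}$ enters, through the identity $\sum_{k>n}2^{-k}=2^{-n}$; the same care is needed when computing the closures in the third equality. Everything else is routine combinatorics with finitely many coordinates.
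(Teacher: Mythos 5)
Your proof is correct and follows essentially the same route as the paper's: the same dyadic ``first-disagreement'' computation for part (1) (the paper phrases it as $G \Delta \psi^{-1}(I_1)$ not meeting $E_n(\psi)$), and the identical partition of $\E(I_0,I_1)$ over the splittings of the uncovered indices for part (2). If anything, you are more careful than the paper on the one genuine subtlety—that $\overline{B}$ is defined as the \emph{closure} of the open ball, so the deleted boundary point must be exhibited as a limit of ball elements to identify the closed balls with the clopen cylinders—a point the paper's terser argument leaves implicit.
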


\begin{proof}\hfill
\begin{enumerate}
\item For the first equality, it is clear that the left-hand side is
contained in the right. To show the reverse inclusion, if $G$ is in the
right-hand side, then $G \Delta \psi^{-1}(I_1)$ cannot intersect
$E_n(\psi)$ (which was defined in Definition \ref{Dnpsi}), so it must be
disjoint from $\psi^{-1}(I_0)$, and must contain $\psi^{-1}(I_1)$.
The second equality is now easy to show, and one inclusion in the third
equality as well. For the converse, if $G \in \E(I_0, I_1)$, then either
$\psi^{-1}(n+1) \in G$ -- whence $G$ is in the second closed ball --
otherwise $G$ is in the first closed ball.

\item Suppose $\max (I_0 \coprod I_1) = n$, and $\{ 1, \dots, n \}
\setminus (I_0 \cup I_1) = \{ m_1 < \cdots < m_l \}$. Then:
\[ \E(I_0, I_1) = \coprod_{J_0 \subset \{ m_1, \dots, m_l \}} \E(I_0 \cup
J_0, I_1 \cup \{ m_1, \dots, m_l \} \setminus J_0). \]

\noindent The result now follows from the previous part.
\end{enumerate}
\end{proof}

Finally, we use the above results to prove the main result in this
subsection.

\begin{proof}[Proof of Theorem \ref{Tsigma}]
It is clear by Definition \ref{D2} that every $G \in \G(V)$ is the limit
of a sequence $G_n$ of finite graphs: set $G_n := G \cap E_n(\psi)$. Now
%%{\color{red}
\comment{
if $d$ metrizes the product topology in $\G(V)$, then $d(G_n,G) \to
0$, which shows that $\G_0(V)$ is dense in $(\G(V),d)$ for all such $d$.
It is now standard that the countable collection $\{ B(G, 2^{-n}, d) : n
\in \N, G \in \G_0(V) \}$ of open balls is a base for $\G(V)$. Therefore
}
$\B_{\G(V)} \subset \Sigma_{d,ball}$; the reverse inclusion is obvious.
Also note that the $\sigma$-algebras generated by the open and closed
$d$-balls are both equal.

We now claim that some of the inclusion relations hold among the
$\sigma$-algebras defined above, for arbitrary vertex sets $V$. Namely,
we claim for all sets $V$:
\begin{equation}\label{Esigma}
\Sigma_{\rm meas} = \Sigma_{\E,0} \subset \Sigma_\E \subset \B_{\G(V)} =
\Sigma_0.
\end{equation}

%%{\color{red}
To show Equation \eqref{Esigma}, note that since $\G(V)$ is a compact
Hausdorff topological space by Theorem \ref{Theart2}, hence $K \subset
\G(V)$ is compact if and only if $\G(V) \setminus K$ is open. This proves
that $\B_{\G(V)} = \Sigma_0$. Also note that $\Sigma_{\E,0}$ is generated
by all sets $\E(I_0, I_1)$, where we may assume $I_0, I_1$ to be either
finite or countable -- that both of these choices yield equivalent
$\sigma$-algebras follows from repeated applications of Equation
\eqref{Emeasure6}.
Next, note that $S_{i_0} = \E(\emptyset, \{ i_0 \})$ for all $i_0 \in
I$. Hence if $I_0, I_1$ are finite disjoint subsets of $K_V$, then
\[ \E(I_0, I_1) = \bigcap_{i \in I_0} (\G(V) \setminus S_i) \cap
\bigcap_{i \in I_1} S_i. \]

\noindent This proves that $\Sigma_{\rm meas} = \Sigma_{\E,0}$.
Next, that $\Sigma_{\E,0} \subset \Sigma_\E$ is obvious. Finally,
$\Sigma_\E \subset \B_{\G(V)}$ by Lemma \ref{L4}, since the sets $\E(I_0,
I_1)$ are closed for disjoint $I_0, I_1 \subset K_V$.
%%}

Given Equation \eqref{Esigma}, it remains to prove that every open set
$\U \subset \G(V)$ is in $\Sigma_\E$. First, fix any $G \in \U$ that is
not cofinite, i.e., $G \notin \G_1(V)$. Since $\U$ is open, $B(G,
\epsilon, \heart{\cdot}{2}) \subset \U$ for some $\epsilon>0$.
Choose $N > 0$ such that $2^{-N} < \epsilon$, and fix $n > N$ such that
$\psi^{-1}(n) \notin G$ (since $G \notin \G_1(V)$). Define the finite
graph $G_0 = E_{n-1}(\psi) \cap G \in \G_0(V)$. Then since $n \notin
\psi(G)$ and $G \notin \G_1(V)$, hence $\heart{G - G_0}{2} < 2^{-n} <
2^{-N} < \epsilon$. Thus, $2^{-n} \leq 2^{-N-1} < \epsilon/2$.

Now use the first part of Proposition \ref{P4} with $I_1 := \{ \psi(G_0)
\} \subset \{ 1, \dots, n-1 \}$. It follows that $B(G_0, 2^{-n},
\heart{\cdot}{2}) \in \Sigma_\E$. Moreover, $2^{-n} < \epsilon/2$, so
\[ G \in B(G_0, 2^{-n}, \heart{\cdot}{2}) \subset B(G_0, \epsilon/2,
\heart{\cdot}{2}) \subset B(G, \epsilon, \heart{\cdot}{2}) \subset \U. \]

\noindent But now we are done: $\U$ is the union of the countable set $\U
\cap \G_1(V)$, and for each $G \in \U \setminus \G_1(V)$, the open ball
$B(G_0, 2^{-n}, \heart{\cdot}{2})$ as above.
Since each of these sets is in $\Sigma_\E$, and there are only countably
many such sets (since they are in bijection with a subset of $\G_0(V)
\times \N$), hence $\U$ is a countable union of elements of $\Sigma_\E$.
Thus, $\U \in \Sigma_\E$, whence $\B_{\G(V)} \subset \Sigma_\E$, as
desired.
\end{proof}
%}}}

%{{{1 Section 2.2 - Haar measure
\subsection{Haar measure}\label{Shaar}

We now define and study a large family of measures $\mu_P$ on the space
$\G(V)$, eventually focussing on the Haar measure and the proof of the
first main result, Theorem \ref{T1}. Given any labelled set $V$ and any
function $P : K_V \to [0,1]$, the map $\mu_{P,e}$ assigning $P(e)$ to $\{
e \}$ and $1 - P(e)$ to $\emptyset$ is a Bernoulli probability measure on
the Bernoulli space $K_{V(e)}$. Now recall the $\sigma$-algebra
$\Sigma_{\rm meas}$ \eqref{Eproduct}, and define the product measure
$\mu_P$ on finite intersections of these sets via: $\mu_P(S_{e_1} \cap
\cdots \cap S_{e_n}) := \prod_{i=1}^n P(e_i)$ for distinct $e_i \in K_V$.
One can ask if this information is sufficient to determine $\mu_P$ on
$(\G(V), \Sigma_{\rm meas})$. To answer this question, recall the
following results.

\begin{prop}[{\cite[Theorem 6.1 and Corollary 6.1]{JP}}]\label{Pjp}
Suppose a $\sigma$-algebra $(\Omega,\A)$ is generated by a subset
$\mathcal{C} \subset \A$ that is closed under finite intersections.
\begin{enumerate}
\item Two probability measures on $\A$ are equal if and only if they
agree on $\mathcal{C}$.

\item Suppose $\mathcal{C}$ is an algebra, and $\mu' : \mathcal{C} \to
[0,1]$ is a probability measure (satisfying countable additivity as well
as that $\mu'(\Omega) = 1$). Then $\mu'$ extends uniquely to a measure on
all of $\A$.
\end{enumerate}
\end{prop}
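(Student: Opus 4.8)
The plan is to handle the two parts via the two classical pillars of measure theory: Dynkin's $\pi$-$\lambda$ theorem for the uniqueness assertion in part (1), and the Carath\'eodory outer-measure construction for the existence assertion in part (2).

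For part (1), the ``only if'' direction is trivial, so suppose $\mu_1, \mu_2$ are probability measures on $\A$ that agree on $\mathcal{C}$. I would first note that a collection closed under finite intersections is exactly a $\pi$-system, and then consider the agreement class
\[ \mathcal{D} := \{ A \in \A : \mu_1(A) = \mu_2(A) \}. \]
The key step is to check that $\mathcal{D}$ is a $\lambda$-system: it contains $\Omega$ since $\mu_1(\Omega) = \mu_2(\Omega) = 1$; it is closed under proper differences, because $\mu_i(B \setminus A) = \mu_i(B) - \mu_i(A)$ whenever $A \subset B$ (here finiteness of the measures is used, so that the subtraction is legitimate); and it is closed under increasing countable unions by continuity from below. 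Since $\mathcal{C} \subset \mathcal{D}$ by hypothesis and $\mathcal{C}$ is a $\pi$-system, Dynkin's $\pi$-$\lambda$ theorem forces $\A = \sigma(\mathcal{C}) \subset \mathcal{D}$, whence $\mu_1 = \mu_2$ on all of $\A$.

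For part (2), existence is the substantive claim; uniqueness will then follow immediately from part (1). Assuming $\mathcal{C}$ is an algebra and $\mu'$ is countably additive on $\mathcal{C}$ with $\mu'(\Omega) = 1$, I would define the induced outer measure
\[ \mu^*(A) := \inf \left\{ \sum_{n} \mu'(C_n) : A \subset \bigcup_n C_n, \ C_n \in \mathcal{C} \right\}, \qquad A \subset \Omega. \]
The routine steps are: verify that $\mu^*$ is an outer measure (monotone, countably subadditive, vanishing on $\emptyset$); show that every $C \in \mathcal{C}$ is Carath\'eodory-measurable, i.e.\ that it splits every test set additively; and invoke Carath\'eodory's theorem to conclude that the $\mu^*$-measurable sets form a $\sigma$-algebra $\mathcal{M}$ on which $\mu^*$ is countably additive. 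Since $\mathcal{C} \subset \mathcal{M}$, we obtain $\A = \sigma(\mathcal{C}) \subset \mathcal{M}$, and restricting $\mu^*$ to $\A$ yields the desired measure, whose restriction to $\mathcal{C}$ agrees with $\mu'$.

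The main obstacle is precisely the identity $\mu^*|_{\mathcal{C}} = \mu'$. The inequality $\mu^*(C) \leq \mu'(C)$ is immediate from the trivial cover $C \subset C$, but the reverse $\mu'(C) \leq \mu^*(C)$ requires showing that $\mu'(C) \leq \sum_n \mu'(C_n)$ for every countable cover $C \subset \bigcup_n C_n$ by members of $\mathcal{C}$. This is exactly where \emph{countable} (as opposed to merely finite) additivity of $\mu'$ on the algebra is indispensable: one disjointifies the cover within $\mathcal{C}$ and applies countable additivity of the premeasure. Verifying the Carath\'eodory-measurability of the sets in $\mathcal{C}$ is the other delicate bookkeeping step, though it becomes purely formal once the definition of $\mu^*$ is unwound.
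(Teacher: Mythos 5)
Your proposal is correct. Note that the paper does not actually prove this proposition: it is imported verbatim from Jacod--Protter (Theorem 6.1 and Corollary 6.1 of \cite{JP}), with only the remark that the proof rests on the Monotone Class Theorem \cite[Theorem 6.2]{JP}. Your treatment of part (1) --- running Dynkin's $\pi$-$\lambda$ theorem on the agreement class $\mathcal{D} = \{A \in \A : \mu_1(A) = \mu_2(A)\}$ --- is essentially the same argument the citation points to, since the Monotone Class Theorem in Jacod--Protter's formulation (the smallest class containing a $\pi$-system and closed under differences and increasing limits is the generated $\sigma$-algebra) is just the $\pi$-$\lambda$ theorem in different clothing; your verification that $\mathcal{D}$ is a $\lambda$-system (using finiteness of the measures for the difference step, and continuity from below) is exactly the standard application. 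For part (2), you supply the Carath\'eodory outer-measure construction that the cited Corollary encapsulates: existence via covers by elements of the algebra, measurability of the generators, and the identity $\mu^*|_{\mathcal{C}} = \mu'$ via disjointification inside the algebra plus countable additivity of the premeasure, with uniqueness then free from part (1). You correctly isolate the two genuinely delicate points (the reverse inequality $\mu'(C) \le \mu^*(C)$ and Carath\'eodory measurability of sets in $\mathcal{C}$), and your outline of how to discharge them is sound. In short: your part (1) matches the route the paper gestures at, and your part (2) makes explicit the standard existence machinery that the paper leaves entirely to the reference.
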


The proof uses the Monotone Class Theorem \cite[Theorem 6.2]{JP}. In
particular, the result affirmatively answers the above question. Thus
$\mu_P$ satisfies the following properties when $V$ is countable:
\begin{itemize}
\item $\mu_P$ is determined uniquely by its restriction to finite
intersections of the sets $S_{e_0}$. In particular, $\mu_P$ is a
probability measure on $\G(V)$, and one writes: $\mu_P := \prod_{e \in
K_V} \mu_{P,e}$.
\item For all disjoint $I_0, I_1$, the sets $\E(I_0, I_1)$ and all Borel
sets are $\mu_P$-measurable.
\item In particular, every locally constant function on $\G(V) \setminus
C$ (where $C$ is a countable set) is $\mu_P$-measurable for all $P : K_V
\to [0,1]$.
\end{itemize}

\begin{remark}
A special case is the measure $\mu_p$ for $p \in [0,1]$, given by $P(e) =
p\ \forall e$. This is precisely the \textbf{Erd\"os-R\'enyi model} for
$\G(V)$.
When $V$ is countable, this construction generalizes the analysis in
\cite{Cam}, where $\G(V)$ is identified (via a bijection $\psi : K_V \to
\N$) with $2^{\N} = \{ 0, 1 \}^{\N}$, the space of binary sequences, as
well as with $[0,1]$ via the binary expansion of any real number $x \in
[0,1]$. Note that this map is precisely the function $\heart{\cdot}{2}$.
Cameron \cite{Cam} also informally writes down a measure on $\G(V)$ as
being induced from countably many independent tosses of a fair coin; this
is precisely the Erd\"os-R\'enyi measure $\mu_{1/2}$ above.
\end{remark}

We now outline the contents in the remainder of the paper. The immediate
task is to prove the first main result in the paper (Theorem \ref{T1})
using the above preliminary results. Following the proof, in Section
\ref{Scomput} we compute expectations of several real-valued functions on
graph space with respect to the Erd\"os-R\'enyi measures $\mu_p$, as an
illustration of how to work with these measures. Having computed the
Haar-expectations of specific functions, we then prove the other main
result, Theorem \ref{Thaar} (in the following section); this result deals
with transporting the Haar-expectation of arbitrary functions between
graph space and the real line. The paper concludes with a study of
Fourier analysis on $\G(V)$.

\begin{proof}[Proof of Theorem \ref{T1}]
We make the following claim:\medskip

\textit{For any $V$ (and up to scaling), when restricted to $\Sigma_{\rm
meas} = \Sigma_{\E,0}$, the Haar measure necessarily equals $\mu_P$ with
$P(e) = 1/2$ for all $e \in K_V$. In other words, $\muhaar \equiv
\mu_{1/2}$ on $\Sigma_{\rm meas}$.}\medskip

%\begin{theorem}[Weil]
%For all sets $V$, there exists a unique translation-invariant two-sided
%Haar measure $\muhaar$ on the compact abelian group $(\G(V),
%\B_{\G(V)})$, such that $\muhaar(\G(V)) = 1$. \qed
%\end{theorem}
%
%\noindent (That $\muhaar(\G(V)) < \infty$ is because $\G(V)$ is
%compact.) The first task is to identify $\muhaar$.

That $\muhaar$ exists and is the unique translation-invariant probability
measure on $\G(V)$ follows by a classical result of Weil \cite{Weil}
(also proved by Cartan), since $\G(V)$ is a compact topological group.
Now suppose $I \subset K_V$ is finite. Then for all partitions $I = I_0
\coprod I_1$, one computes using Equation
\eqref{Emeasure1}:
\[ \muhaar(\E(I_0, I_1)) = \muhaar(\E(I_0, I_1) + I_1) =
\muhaar(\E(I,\emptyset)), \]

\noindent by translation-invariance. Now since $\displaystyle \G(V) =
\coprod_{I_0 \subset I} \E(I_0, I \setminus I_0)$, hence
\begin{equation}\label{Ecylinder}
\muhaar(\E(I_0, I_1)) = 2^{-|I|} = \mu_{1/2}(\E(I_0, I_1)).
\end{equation}

\noindent Note that the sets $\E(I_0, I_1)$ generate $\Sigma_{\E,0}$.
Hence $\muhaar \equiv \mu_{1/2}$ on $\Sigma_{\E,0}$ by Proposition
\ref{Pjp}, since the collection of sets $\E(I_0, I_1)$ (for finite
disjoint $I_0, I_1 \subset K_V$) is closed under finite intersections by
Equation \eqref{Emeasure6}.

We now prove the main theorem using the claim. Note that computing
$\muhaar$ on $\Sigma_{\rm meas}$ uniquely determines the Haar measure
when $V$ is countable, since $\Sigma_{\E,0} = \B_{\G(V)}$ when $V$ is
countable (by Theorem \ref{Tsigma}). Thus $\muhaar \equiv \mu_{1/2}$.

Next, we assert that the Haar measure of a $\heart{\cdot}{2}$-ball of
radius $\epsilon \in [0,1]$ is $\epsilon$. The assertion is clear for
$\epsilon = 0$ (i.e.~for the empty set) and $\epsilon = 1$ (which yields
the entire space $\G(V)$ except one point), since points must have Haar
measure zero or else $\muhaar(\G(V)) = \infty$.
Now assume $\epsilon \in (0,1)$ and $G = {\bf 0}$ (by the
translation-invariance of $\mu_{1/2}$). It is enough to prove the
assertion for $\epsilon$ of the form $2^{-n_1} + 2^{-n_2} + \cdots +
2^{-n_k}$ with $n_1 < n_2 < \cdots < n_k \in \N$, because then given any
$\epsilon > 0$, approximate it from below by a nondecreasing sequence
$\epsilon_n \to \epsilon^-$ (and from above by a nonincreasing sequence
$\epsilon'_n \to \epsilon^+$), with each $\epsilon_n, \epsilon'_n$ a
finite sum of the above form. (For instance, take $\epsilon_n$ to be the
truncated binary expansions of $\epsilon$.) Then,
\begin{align*}
\mu_{1/2}(B({\bf 0}, \epsilon, \heart{\cdot}{2})) \geq \mu_{1/2}
\left( \bigcup_{n=1}^\infty B({\bf 0}, \epsilon_n, \heart{\cdot}{2})
\right) = &\ \lim_{n \to \infty} \mu_{1/2}(B({\bf 0}, \epsilon_n,
\heart{\cdot}{2}))\\
= &\ \lim_{n \to \infty} \epsilon_n = \epsilon,
\end{align*}

\noindent and similarly, $\mu_{1/2}(B({\bf 0}, \epsilon,
\heart{\cdot}{2})) \leq \lim_{n \to \infty} \epsilon'_n = \epsilon$.

Thus it remains to prove the assertion for $\epsilon = 2^{-n_1} +
2^{-n_2} + \cdots + 2^{-n_k}$; we do so by induction on $k \geq 0$. For
$k=0$ the result was proved earlier in this proof; from this the result
follows for $k=1$ by using Proposition \ref{P4} and Equation
\eqref{Ecylinder}. Now given the result for $k-1 \geq 0$, set $\epsilon'
= \sum_{0<i<k} 2^{-n_i}$ and $\epsilon = \epsilon' + 2^{-n_k}$. The
graphs in $B({\bf 0}, \epsilon, \heart{\cdot}{2})$ have possible
$\heart{\cdot}{2}$-values in $[0, \epsilon) = [0, \epsilon') \coprod \{
\epsilon' \} \coprod (\epsilon', \epsilon)$.
Using that the binary expansion is a bijection from $[0,1]$ to binary
sequences (except on a countable set described in Theorem \ref{Theart2}),
one notes that the graphs corresponding to the first two sets of
$\heart{\cdot}{2}$-values above are, respectively, $S_{\epsilon'} :=
B({\bf 0}, \epsilon', \heart{\cdot}{2})$ and the doubleton set
$S' := \{ T_{k-1}, T_{k-2} \coprod \{ n_{k-1} + 1, n_{k-1} + 2, \dots \}
\}$, where $T_k := \{ n_1, \dots, n_k \}$ for all $k$.
Moreover, if $\heart{G}{2} \in (\epsilon', \epsilon)$ for some $G \in
\G(V)$, then it is not too hard to show (again using binary expansions,
via Theorem \ref{Theart2}) that from among the integers $1, \dots, n_k$,
the only ones in $\psi(G)$ are precisely $n_1, \dots, n_{k-1}$. Thus, the
graphs whose $\heart{\cdot}{2}$-values lie in $(\epsilon', \epsilon)$
form the set
\[ S'' := \E(\{ 1, \dots, n_k \} \setminus T_{k-1}, T_{k-1}) \setminus \{
\{ n_1, \dots, n_{k-1}, n_k + 1, n_k + 2, \dots \} \}. \]

\noindent Since points have zero measure, $\mu_{1/2}(S'') = 2^{-n_k}$ by
Equation \eqref{Ecylinder}. Hence:
\begin{align*}
\mu_{1/2}(B({\bf 0}, \epsilon, \heart{\cdot}{2})) =
\mu_{1/2} \left( S_{\epsilon'} \coprod S' \coprod S'' \right) =
%&\ \mu_{1/2}(S_{\epsilon'}) + 0 + \mu_{1/2}(S'')\\
 \epsilon' + 0 + 2^{-n_k} = \epsilon,
\end{align*}

\noindent where the penultimate equality follows from the induction
hypothesis and previous results. This completes the proof for open balls,
by induction.

For the closed ball $\overline{B} := \overline{B}(G, \epsilon,
\heart{\cdot}{2})$, if $\epsilon = 1$ then $\mu_{1/2}(\overline{B}) =
\mu_{1/2}(\G(V)) = 1$, while if $\epsilon < 1$, 
\[ B(G, \epsilon, \heart{\cdot}{2}) \subset \overline{B}(G, \epsilon,
\heart{\cdot}{2}) = \bigcap_{n \in \N} B(G, \epsilon + n^{-1},
\heart{\cdot}{2}). \]

\noindent Thus the result for closed balls follows from the result for
open balls, since we have:
\[ \epsilon \leq \mu_{1/2}(\overline{B}(G, \epsilon, \heart{\cdot}{2}))
\leq \inf_{(1 - \epsilon)^{-1} \leq n \in \N} \epsilon + n^{-1} =
\epsilon. \]
\end{proof}
%}}}

%{{{1 Section 2.3 - Examples: computing expectations
\subsection{Examples: computing expectations}\label{Scomput}

We now work out some examples of computing expectations with respect to
the probability measures on $\G(V)$ that were introduced in Section
\ref{Shaar}. In the following results, $V$ is assumed to be countable.
%The first result is relatively straightforward and hence its proof is omitted.

\begin{prop}\label{P6}
Fix $P : K_V \to [0,1]$ and a bijection $\psi : K_V \to \N$.
Then every countable set has $\mu_P$-measure zero if and only if
$\displaystyle \prod_{e \in K_V} \max(P(e), 1-P(e)) = 0$. In particular,
this holds if there exists $\epsilon>0$ such that the set $\{ e \in K_V :
P(e) \in (\epsilon, 1 - \epsilon) \}$ is infinite (e.g., if $\mu_P \equiv
\mu_p$ for $p \in (0,1)$).
\end{prop}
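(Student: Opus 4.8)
The plan is to reduce the statement about arbitrary countable sets to one about single points, and then to compute the $\mu_P$-measure of an individual graph as an infinite product. Indeed, by countable additivity a countable set is $\mu_P$-null if and only if each of its points is, so every countable set has measure zero precisely when every singleton $\{ G \}$ does; the entire proposition thus hinges on understanding $\mu_P(\{ G \})$.

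First I would fix $G \in \G(V)$ and observe that $\{ G \} = \E(K_V \setminus G, G)$, which is the decreasing intersection of finite cylinder sets
\[ \{ G \} = \bigcap_{n \in \N} \E\big( (K_V \setminus G) \cap E_n(\psi),\ G \cap E_n(\psi) \big). \]
Since each such cylinder has $\mu_P$-measure $\prod_{e \in G \cap E_n(\psi)} P(e) \cdot \prod_{e \in (K_V \setminus G) \cap E_n(\psi)} (1 - P(e))$, continuity of the finite measure $\mu_P$ from above gives the key formula
\[ \mu_P(\{ G \}) = \prod_{e \in G} P(e) \cdot \prod_{e \notin G} (1 - P(e)). \]

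With this formula both directions follow quickly. Each factor is bounded above by $\max(P(e), 1 - P(e))$, so $\mu_P(\{ G \}) \leq \prod_{e \in K_V} \max(P(e), 1 - P(e))$ for every $G$; and the \emph{greedy} graph $G^\ast$ containing $e$ exactly when $P(e) > 1/2$ turns this into an equality, whence $\sup_G \mu_P(\{ G \}) = \prod_{e} \max(P(e), 1 - P(e))$. Therefore, if the product vanishes then every singleton---and hence every countable set---is $\mu_P$-null, while if the product is positive then the single point $\{ G^\ast \}$ is a countable set of positive measure; this establishes the equivalence. For the final assertion, whenever $\{ e : P(e) \in (\epsilon, 1 - \epsilon) \}$ is infinite, each such edge contributes a factor $\max(P(e), 1 - P(e)) < 1 - \epsilon$, so the partial products over these edges tend to $0$ and the full product (the remaining factors being at most $1$) vanishes; the case $\mu_P \equiv \mu_p$ with $p \in (0,1)$ is simply the instance in which every edge qualifies, for a small enough $\epsilon$.

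The only genuinely measure-theoretic step---and the main point requiring care---is the infinite-product formula for $\mu_P(\{ G \})$: one must verify that the singleton really is the countable decreasing intersection of finite cylinder sets displayed above, and then invoke continuity from above, which is legitimate exactly because $\mu_P$ is a finite (probability) measure. Everything after that is elementary manipulation of infinite products whose factors lie in $[0,1]$.
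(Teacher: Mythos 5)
Your proof is correct and takes essentially the same approach as the paper: both reduce countable sets to singletons by countable additivity, compute $\mu_P(\{G\})$ as an infinite product by writing $\{G\}$ as a decreasing intersection of finite cylinder sets and using continuity from above, and identify $\sup_G \mu_P(\{G\})$ with $\prod_{e \in K_V} \max(P(e), 1-P(e))$, attained at the greedy graph (the paper's $G_P = \{e : P(e) \geq 1/2\}$). The final assertion about infinitely many factors below $1-\epsilon$ is likewise handled identically.
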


%%{\color{red}
\begin{proof}
Given $G \in \G(V)$ and $n \in \N$, first define
\[ p_n(G) := \prod_{e \in E_n(\psi) \cap G} P(e) \prod_{e \in E_n(\psi)
\setminus G} (1 - P(e)), \quad \pi_n := \prod_{e \in E_n(\psi)}
\max(P(e), 1 - P(e)). \]

\noindent Note that both $p_n(G)$ and $\pi_n$ are non-increasing
nonnegative sequences, with $p_0(G) = \pi_0 = 1$. In particular, they are
both convergent. Moreover, note that
\[ \{ G \} = \bigcap_{n \in \N} \E(\{ 1, \dots, n \} \setminus G, G \cap
\{ 1, \dots, n \})\ \forall G \in \G(V). \]

\noindent Now since $\displaystyle \mu_P(\E(I_0, I_1)) = \prod_{i \in
I_0} (1 - P(\psi^{-1}(i))) \prod_{i \in I_1} P(\psi^{-1}(i))$ for finite
disjoint subsets $I_0, I_1 \subset \N$, we compute:
\begin{align*}
0 \leq &\ \mu_P(G) = \lim_{n \to \infty} \mu_P(\E(\{ 1, \dots, n \}
\setminus G, G \cap \{ 1, \dots, n \})) = \lim_{n \to \infty} p_n(G) \\
\leq &\ \lim_{n \to \infty} \pi_n = \mu_P(G_P),
\end{align*}

\noindent where $G_P$ (or its set of edges) equals $\{ e \in K_V : P(e)
\geq 1/2 \}$. Thus, every countable set has $\mu_P$-measure zero, if and
only if $\mu_P(G) = 0\ \forall G$, if and only if $\mu_P(G_P) = 0$.

Finally, for the second sub-part, we simply note that infinitely many of
the terms in the product are less than $\epsilon < 1$, so $\mu_P(G_P) =
0$.
\end{proof}
%%}

In order to state the next result, first recall some notation from
\cite{KR1}.

\begin{definition}[\cite{KR1}]
Let $V$ be a countable labelled set.
\begin{enumerate}
\item Define $\displaystyle \ell_+^1(K_V) := \Big\{ \varphi : K_V \to (0,
\infty)\ \big| \sum_{e \in K_V} \varphi(e) < \infty \Big\}$.

\item Also define $\ell_+^\infty(K_V)$ to be the set of functions $\zeta
: K_V \to (0, \infty)$ such that $\zeta(K_V)$ has precisely one
accumulation point: $0$ (and $\infty$ is not an accumulation point, i.e.,
$\zeta$ is bounded).

\item Further define $\ell^1_\times(K_V) := \{ \phi : K_V \to (1,
\infty),\ \prod_{e \in K_V} \phi(e) < \infty \}$.

\item Given ${\bf 0} \neq G \in \G(V)$, $\varphi \in \ell_+^1(K_V)$,
$\zeta \in \ell_+^\infty(K_V)$, and $\phi \in \ell^1_\times(K_V)$,
define:
\[ \normo{G} := \sum_{e \in G} \varphi(e), \quad \normi{G} := \max \{
\zeta(e) : e \in G \}, \quad \normo{{\bf 0}} = \normi{{\bf 0}} =
\normp{{\bf 0}} := 0, \]

\noindent and $\normp{G} := \left( \prod_{e \in G} \phi(e) - 1
\right)^{1/n}$, with the smallest $n \in \N$ such that $2^n \geq 1 +
\prod_{e \in K_V} \phi(e)$.
\end{enumerate}
\end{definition}

It was shown in \cite{KR1} that the maps $\normo{.}, \normi{.}$ induce
topologically equivalent translation-invariant metrics on graph space
$\G(V)$, which metrize its product topology. One can similarly show the
following fact.

\begin{lemma}
For all $\phi \in \ell^1_\times(K_V)$, the maps $\normp{.}$ induce
topologically equivalent translation-invariant metrics on $\G(V)$, which
metrize its product topology.
\end{lemma}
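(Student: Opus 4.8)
The plan is to follow the template used in \cite{KR1} for $\normo{\cdot}$ and $\normi{\cdot}$, reducing the topological claims to the already-known $\normo{\cdot}$-metric and isolating the triangle inequality as the one genuinely new computation. Write $d_\phi(G,G') := \normp{G+G'} = \normp{G\Delta G'}$ and set $P_G := \prod_{e\in G}\phi(e)$, so that $\normp{G} = (P_G-1)^{1/n}$, where $n$ is the least integer with $2^n \ge 1+M$ and $M := \prod_{e\in K_V}\phi(e) < \infty$. Three of the metric axioms are immediate: translation-invariance is automatic because $\G(V)$ is $2$-torsion, so $(G+H)\Delta(G'+H) = G\Delta G'$; symmetry is clear; and non-degeneracy holds since each $\phi(e)>1$ forces $P_G = 1 \iff G = {\bf 0}$, whence $d_\phi(G,G')=0 \iff G=G'$. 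The only substantial point, and the main obstacle, is the triangle inequality.

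For the triangle inequality, fix $G,G',G''$ and set $A := G\Delta G'$, $B := G'\Delta G''$, so that $G\Delta G'' = A\Delta B$. Since $A\setminus B,\ B\setminus A,\ A\cap B$ partition $A\cup B$, a direct computation gives $P_A P_B = P_{A\Delta B}\,P_{A\cap B}^2 \ge P_{A\Delta B}$, using $P_{A\cap B}\ge 1$. As $z\mapsto z^{1/n}$ is increasing, it therefore suffices to prove $(P_AP_B-1)^{1/n}\le (P_A-1)^{1/n}+(P_B-1)^{1/n}$. Writing $X := P_A-1$ and $Y := P_B-1$ and noting $P_AP_B-1 = X+Y+XY$, this reduces to the scalar inequality
\[ (X+Y+XY)^{1/n} \le X^{1/n}+Y^{1/n}, \qquad X,Y\in[0,M-1]. \]
The defining property $2^n \ge 1+M$ yields the crucial bound $X,Y\le M-1\le 2^n-2$, and I expect this calibration of $n$ to be exactly what makes the inequality hold, with the diagonal case $X=Y=2^n-2$ being the equality case.

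To prove the scalar inequality, put $u := X^{1/n}$, $v := Y^{1/n}$, so $u,v\le (2^n-2)^{1/n}$; after subtracting $u^n+v^n$ from $(u+v)^n$ the claim becomes
\[ u^n v^n \le \sum_{k=1}^{n-1}\binom{n}{k}u^kv^{n-k}. \]
The plan is to apply weighted AM--GM to the right-hand side with weights $\binom{n}{k}$, which sum to $2^n-2$. Using the identity $\sum_{k=1}^{n-1}k\binom{n}{k} = n(2^{n-1}-1) = \tfrac{n}{2}(2^n-2)$ (and its mirror for the exponent of $v$), the weighted geometric mean simplifies to $(uv)^{n/2}$, giving $\sum_{k=1}^{n-1}\binom{n}{k}u^kv^{n-k} \ge (2^n-2)(uv)^{n/2}$. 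Finally, since $uv\le (2^n-2)^{2/n}$ one has $(uv)^{n/2}\le 2^n-2$, so $(2^n-2)(uv)^{n/2}\ge (uv)^{n/2}(uv)^{n/2} = u^nv^n$, which closes the argument (the degenerate case $uv=0$ being trivial). This AM--GM estimate is the heart of the proof.

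It remains to establish the topological assertions, which are comparatively soft. Put $w := \log\phi$; then $w(e)>0$ and $\sum_e w(e) = \log M < \infty$, so $w\in \ell_+^1(K_V)$ and $\normp{G} = (e^{\normo{G}}-1)^{1/n}$, where $\normo{\cdot}$ is formed with $\varphi = w$. The map $t\mapsto (e^t-1)^{1/n}$ is a continuous, strictly increasing bijection of $[0,\infty)$ fixing $0$, so $d_\phi(G_k,G)\to 0$ if and only if $d_w(G_k,G)\to 0$; hence $d_\phi$ and the $\normo{\cdot}$-metric $d_w$ have the same convergent sequences and thus induce the same topology. Since \cite{KR1} shows every such $d_w$ is translation-invariant and metrizes the product topology, the same holds for $d_\phi$; and as each $d_\phi$ is topologically equivalent to a metric of $\normo{\cdot}$-type, they all metrize the single product topology and are therefore pairwise topologically equivalent. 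Once the AM--GM estimate is in hand, everything else follows the pattern of \cite{KR1}.
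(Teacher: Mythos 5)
Your proposal is correct, and it actually supplies details that the paper itself omits: the paper's ``proof'' of this lemma is the single remark that one can argue as in \cite{KR1} for $\normo{.}$ and $\normi{.}$, with no computation given. Your reduction of the topological claims is exactly in that spirit -- writing $\normp{G}=(e^{\normo{G}}-1)^{1/n}$ with $\varphi=\log\phi\in\ell_+^1(K_V)$ and invoking the monotone homeomorphism $t\mapsto(e^t-1)^{1/n}$ of $[0,\infty)$ is surely the intended meaning of ``similarly.'' The genuinely new content in your write-up is the triangle inequality, and your argument for it is sound: the identity $P_AP_B=P_{A\Delta B}P_{A\cap B}^2$, the reduction to $u^nv^n\le\sum_{k=1}^{n-1}\binom{n}{k}u^kv^{n-k}$, and the weighted AM--GM step (the exponent computation $\sum_{k=1}^{n-1}k\binom{n}{k}=\tfrac n2(2^n-2)$ is right) all check out, and the constraint $u,v\le(2^n-2)^{1/n}$ coming from the definition of $n$ is used exactly where it is needed. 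Two small points worth noting explicitly: first, since $\prod_{e\in K_V}\phi(e)>1$, the minimal $n$ satisfies $n\ge 2$, so the index range $1\le k\le n-1$ in your binomial sum is nonempty (for $n=1$ your target inequality would be false for $u,v>0$, but that case never arises); second, your observation that $X=Y=2^n-2$ gives equality shows the calibration $2^n\ge 1+\prod_{e\in K_V}\phi(e)$ in the paper's definition of $\normp{.}$ is sharp -- a pleasant by-product that explains an otherwise mysterious normalization.
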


\noindent The following result provides examples of computing the
expectations of the aforementioned functions (and others) with respect to
the Erd\"os-R\'enyi type product measures $\mu_p$ defined in Section
\ref{Shaar}.

\begin{prop}\label{P2}
For all $p \in (0,1)$, the expectation $\be_{\mu_p}$ is a linear
functional on the space of measurable functions $h : (\G(V), \B_{\G(V)})
\to \R$.
\begin{enumerate}
\item Given $k>0$ and a graph $G \in \G(V)$ with at least $k$ edges,
labelled by $\psi(G) = \{ n_1 < n_2 < \cdots < n_k < \cdots \} \subset
\N$, define the \textit{$k$th minimum edge number} of $G$ to be
$\Psi_k(G) := n_k$. Then $\be_{\mu_p}[\Psi_k] = k/p$.

\item Given $\zeta \in \ell_+^\infty(K_V)$, choose any bijection
$\psi_\zeta : K_V \to \N$ as in Lemma \ref{Lstat} (below). Then for all
$f : {\rm im}(\zeta) \to \R$,
\[ \be_{\mu_p}[f(\normi{.})] = \sum_{n \in \N} p(1-p)^{n-1}
f(\zeta(\psi_\zeta^{-1}(n))). \]

\item For all $\varphi \in \ell_+^1(K_V)$ and $G \in \G(V)$, $\normo{G} =
\sum_{e \in K_V} \varphi(e) {\bf 1}_{G \in \E(\emptyset, \psi(e))}$.
Moreover,
\[ \be_{\mu_p}[\normo{.}] = p \normo{K_V}, \qquad
\be_{\mu_p}[(\normo{.})^2] =  p(1-p) \|K_V\|^1_{\varphi^2} + p^2
(\normo{K_V})^2. \]

\item If $\normp{K_V} \leq \sqrt[n]{2^n - 2}$ for some $\phi \in
\ell^1_\times$ and $n>0$, then
\[ \be_{\mu_p}[(\normp{.})^n] = -1 + \prod_{e \in K_V} (1 -p + p \phi(e))
\leq (\normp{K_V})^n. \]
\end{enumerate}
\end{prop}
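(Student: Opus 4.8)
The plan is to reduce all four computations to the single structural fact that under $\mu_p = \prod_e \mu_{p,e}$ the edge-indicators $\{ {\bf 1}_{e \in G} : e \in K_V \}$ are independent Bernoulli$(p)$ random variables. Linearity of $\be_{\mu_p}$ on measurable functions is immediate from linearity of the integral. Before the computations I would record two almost-sure facts: since $0 < p < 1$, the empty graph has $\mu_p$-measure $\prod_e (1-p) = 0$, and the set of finite graphs has measure zero (the events $\{ e \in G \}$ are independent with $\sum_e p = \infty$, so $G$ is infinite $\mu_p$-a.e.\ by the second Borel--Cantelli lemma). Thus $\Psi_k$ and $\normi{.}$ are defined $\mu_p$-almost everywhere, and the stated expectations are unaffected by their values on the null set of finite graphs.

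For part (1), I would identify $\Psi_k(G) = n_k$ with the index of the $k$th success in the independent Bernoulli$(p)$ sequence $X_i := {\bf 1}_{\psi^{-1}(i) \in G}$. Writing $n_k = n_1 + \sum_{j=2}^{k}(n_j - n_{j-1})$ exhibits $\Psi_k$ as a sum of $k$ independent geometric gaps, each with law $\Pr(\mathrm{gap} = m) = (1-p)^{m-1}p$ and mean $1/p$, so $\be_{\mu_p}[\Psi_k] = k/p$. For part (2), I would use Lemma \ref{Lstat} to pick $\psi_\zeta$ listing edges in non-increasing order of $\zeta$-value; then $\normi{G} = \max_{e\in G}\zeta(e) = \zeta(\psi_\zeta^{-1}(N))$, where $N := \min\{ n : \psi_\zeta^{-1}(n) \in G \}$ is geometric with $\Pr(N=n) = (1-p)^{n-1}p$, and summing $f(\zeta(\psi_\zeta^{-1}(n)))$ against this law gives the stated series.

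For part (3), the identity $\normo{G} = \sum_e \varphi(e){\bf 1}_{e\in G}$ is the definition rewritten via ${\bf 1}_{G\in\E(\emptyset,\{e\})} = {\bf 1}_{e\in G}$. As $\varphi\in\ell_+^1(K_V)$ bounds $\normo{.}\le\normo{K_V}<\infty$, all sums are absolutely convergent and may be interchanged with $\be_{\mu_p}$; hence $\be_{\mu_p}[\normo{.}] = p\,\normo{K_V}$. For the second moment I expand the square and split into diagonal terms ($e=e'$, using ${\bf 1}_{e\in G}^2 = {\bf 1}_{e\in G}$, contributing $p\,\|K_V\|^1_{\varphi^2}$) and off-diagonal terms ($e\ne e'$, using independence, contributing $p^2[(\normo{K_V})^2 - \|K_V\|^1_{\varphi^2}]$); collecting gives $p(1-p)\|K_V\|^1_{\varphi^2} + p^2(\normo{K_V})^2$. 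For part (4), the definition gives $(\normp{G})^n = \prod_{e\in G}\phi(e)-1$, and the hypothesis $\normp{K_V}\le\sqrt[n]{2^n-2}$ is precisely the admissibility condition $2^n\ge 1+\prod_e\phi(e)$ for this exponent. Writing $\prod_{e\in G}\phi(e) = \prod_{e\in K_V}\phi(e)^{{\bf 1}_{e\in G}}$ and using independence, $\be_{\mu_p}[\prod_{e\in G}\phi(e)] = \prod_e(1-p+p\phi(e))$, so $\be_{\mu_p}[(\normp{.})^n] = -1 + \prod_e(1-p+p\phi(e))$; the inequality $\le(\normp{K_V})^n$ follows termwise from $1+p(\phi(e)-1)\le\phi(e)$ (valid as $\phi(e)>1$ and $p\le 1$).

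The main obstacle is justifying the exchange of infinite sums and products with the expectation. The cleanest route is to establish each identity first on the finite sub-product measure over $E_m(\psi)$, where independence is elementary, and then pass to the limit $m\to\infty$: for parts (1)--(3) nonnegativity (Tonelli, monotone convergence) suffices, while for part (4) I would dominate the partial products by the constant $\prod_{e\in K_V}\phi(e)<\infty$ (finite since $\phi\in\ell^1_\times$) and invoke dominated convergence. This convergence of the truncated Bernoulli computations to the full product-measure statements is the one place requiring genuine care; the algebraic identities themselves are routine.
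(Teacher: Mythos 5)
Your proposal is correct. For parts (2)--(4) it is essentially the paper's own argument: identify the relevant events with cylinder sets $\E(I_0,I_1)$, use independence of the edge indicators under $\mu_p$, and justify the interchange of expectation with infinite sums/products by truncating to the first $N$ edges and applying monotone (or dominated) convergence -- the paper does exactly this via the functions $f_N$, and it disposes of the null sets through Proposition \ref{P6} where you invoke Borel--Cantelli; both are valid, yours being self-contained. The genuine difference is part (1). The paper computes the full negative-binomial law
\[ \mu_p(\Psi_k = n) \;=\; \binom{n-1}{k-1} p^k (1-p)^{n-k}, \qquad n \geq k, \]
and then evaluates $\sum_{n \geq k} n \binom{n-1}{k-1} p^k (1-p)^{n-k}$ by means of the generalized binomial expansion of $p^{-(k+1)}$. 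You instead write $\Psi_k = n_1 + \sum_{j=2}^k (n_j - n_{j-1})$ and sum the means of the $k$ inter-arrival gaps, each equal to $1/p$, so linearity gives $k/p$ at once. Your route is shorter and avoids the series manipulation entirely; its one obligation, which you should make explicit, is that each gap has mean $1/p$ -- note that for the expectation you do not actually need the joint independence of the gaps that you assert, only that conditionally on $n_{j-1} = m$ the next gap is geometric($p$), which follows from the independence of the indicators in positions beyond $m$ (all terms are nonnegative, so Tonelli licenses the rearrangement). What each approach buys: the paper's computation yields the explicit distribution of $\Psi_k$, which is more information than its mean and is reused in spirit elsewhere (e.g.\ the geometric law in part (2)); yours buys brevity and a transparent probabilistic mechanism, at the cost of quoting or reproving the standard waiting-time facts for Bernoulli sequences.
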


\noindent In particular, if $X : \G(V) \to \R$ denotes the random
variable $X(G) := \normo{G}$, then $X$ has $\mu_p$-mean $p \normo{K_V}$,
and variance $p(1-p) \|K_V\|^1_{\varphi^2}$. More generally, one can
imitate the proof below to show that for all $n \in \N$,
$\be_{\mu_p}[(\normo{.})^n] = \be_{\mu_p}[X^n]$ equals some
``homogeneous" polynomial in $\{ \|K_V\|^1_{\varphi^r} =
\|K_V\|^r_\varphi : 0 \leq r \leq n \}$, with coefficients that are
polynomials in $p$.
(Here, ``homogeneous" means that every monomial has the same total
degree, with $\|K_V\|^1_{\varphi^r}$ having degree $r$.)

Also note that some of these results can be shown more generally for all
$\mu_P$ (with $P : K_V \to [0,1]$ as in Section \ref{Shaar}). For
example, if $\normp{K_V} \leq \sqrt[n]{2^n - 2}$, then
\[ \be_{\mu_P}[\normo{.}] = \sum_{e \in K_V} P(e) \varphi(e), \qquad
\be_{\mu_P}[(\normp{.})^n] = -1 + \prod_{e \in K_V} (1 - P(e) + P(e)
\phi(e)). \]

The following observation will be used to prove Proposition \ref{P2}.

\begin{lemma}\label{Lstat}
For all $\zeta \in \ell_+^\infty(K_V)$, there exists a bijection
$\psi =\psi_\zeta : K_V \to \N$, such that $\zeta(\psi^{-1}(1)) \geq
\zeta(\psi^{-1}(2)) \geq \cdots$.
\end{lemma}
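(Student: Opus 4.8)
The plan is to distill from the hypothesis $\zeta \in \ell_+^\infty(K_V)$ the one combinatorial fact that drives the whole argument: for every threshold $\epsilon > 0$, the super-level set $\{ e \in K_V : \zeta(e) \geq \epsilon \}$ is finite. Read for the family $(\zeta(e))_{e \in K_V}$ indexed by the countable set $K_V$, this is exactly the operative content of ``$\zeta$ is bounded and $0$ is its only accumulation point'': if some $\{ e : \zeta(e) \geq \epsilon \}$ were infinite, then the corresponding values would form an infinite subset of the bounded interval $[\epsilon, \sup \zeta]$, which by Bolzano--Weierstrass accumulates at some point $\geq \epsilon > 0$ (either an infinitely attained value itself, or a genuine limit of distinct values), contradicting that $0$ is the unique accumulation point. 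I would record this reduction first, since it is precisely where the structure of $\ell_+^\infty(K_V)$ enters; the remainder is a sorting argument that uses nothing about $\zeta$ beyond it.

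Given the reduction, I would fix once and for all an arbitrary reference bijection $\psi_0 : K_V \to \N$ (available since $K_V$ is countably infinite), to be used only for breaking ties, and then define a linear order $\prec$ on $K_V$ by declaring $e \prec e'$ whenever either $\zeta(e) > \zeta(e')$, or $\zeta(e) = \zeta(e')$ together with $\psi_0(e) < \psi_0(e')$. By construction $\prec$ refines the decreasing-$\zeta$ order, so any enumeration of $K_V$ that is increasing for $\prec$ automatically lists the $\zeta$-values non-increasingly; the whole point of the tie-breaker $\psi_0$ is to turn the (possibly non-antisymmetric) preorder ``$\zeta(e) \geq \zeta(e')$'' into a genuine linear order without disturbing this monotonicity.

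The key step is then to check that $(K_V, \prec)$ has order type exactly $\omega$, equivalently that every edge has only finitely many $\prec$-predecessors. Fix $e \in K_V$ and set $v := \zeta(e) > 0$. Any $\prec$-predecessor $e'$ of $e$ satisfies $\zeta(e') \geq \zeta(e) = v$, so the set of predecessors of $e$ is contained in $\{ e' : \zeta(e') \geq v \}$, which is finite by the reduction of the first paragraph (with $\epsilon = v > 0$). Hence each element has finitely many predecessors, and since $K_V$ is infinite, $\prec$ is a well-order of type exactly $\omega$. Letting $\psi : K_V \to \N$ be the unique order isomorphism onto $(\N, <)$ --- concretely $\psi(e) := 1 + |\{ e' \in K_V : e' \prec e \}|$ --- gives a bijection with $\zeta(\psi^{-1}(1)) \geq \zeta(\psi^{-1}(2)) \geq \cdots$, which is the assertion of the lemma.

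I expect the only genuine obstacle to be the first paragraph, namely extracting finiteness of the edge super-level sets from the hypothesis stated on the value set $\zeta(K_V)$: one must make sure the accumulation-point condition is used in the form that controls how many \emph{edges} (not merely how many distinct values) can exceed a given positive level, since a single positive value attained by infinitely many edges would itself be an accumulation point of $(\zeta(e))_{e \in K_V}$ and is therefore excluded. Once this finiteness is in hand, the order-type-$\omega$ argument is entirely routine, and no further properties of $\zeta$ or of $\G(V)$ are needed.
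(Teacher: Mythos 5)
Your proof is correct, and it takes a genuinely different (and more careful) route than the paper's. The paper argues greedily: since $0$ is the only accumulation point of the image set $\zeta(K_V)$, every subset of the \emph{value set} has a maximum element, so one inductively picks $\beta(1), \beta(2), \dots$ as arbitrary maximizers of $\zeta$ over the not-yet-chosen edges and sets $\psi_\zeta := \beta^{-1}$, with surjectivity of $\beta$ asserted as clear. You instead establish up front that each \emph{edge} super-level set $\{ e \in K_V : \zeta(e) \geq \epsilon \}$ is finite, refine the decreasing-$\zeta$ preorder to a linear order $\prec$ by tie-breaking with a reference bijection, show $(K_V, \prec)$ has order type $\omega$ because every element has finitely many predecessors, and take $\psi_\zeta$ to be the order isomorphism onto $(\N, <)$. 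The payoff of your version is precisely the point you flag in your last paragraph: it isolates and settles the multiplicity issue that the paper's proof glosses over. The paper's argument only shows that finitely many \emph{values} exceed any threshold; if a single positive value were attained by infinitely many edges, the greedy selection could fail to exhaust them and $\beta$ would not be surjective --- and in fact the lemma itself is false under the literal image-set reading of the hypothesis (take infinitely many edges of value $1$ together with single edges of values $1/2, 1/3, \dots$: then $\zeta(K_V)$ has $0$ as its only accumulation point, yet no non-increasing enumeration of all edges exists). So the statement requires interpreting the accumulation-point condition as applying to the family $(\zeta(e))_{e \in K_V}$ with multiplicities, which is exactly the reading your first paragraph adopts and justifies; with that reading your argument is airtight, whereas the paper's ``it is clear that this covers all $e \in K_V$'' is the step that silently uses it. The only cost of your route is length: the order-type-$\omega$ verification is routine, as you say, while the paper's greedy construction is shorter but rests on the unexamined surjectivity claim.
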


\noindent For instance if $\zeta(e) = \heart{e}{a}$ for fixed $a>1$ and
all $e \in K_V$, then
$\zeta(\psi^{-1}(n)) = a^{-n}$, so $\psi_\zeta = \psi$.

%%{\color{red}
\begin{proof}
Since the only accumulation point of the image set $\zeta(K_V)$ is
$0$, it follows that for every $e \in K_V$, there are only finitely many
values above $\zeta(e)$ -- and they can all be totally ordered. In other
words, every subset of $\zeta(K_V)$ has a maximum element.
Thus, define $\beta : \N \to K_V$ inductively: $\beta(1)$ is any element
of $\displaystyle \arg \max_{e \in K_V} \zeta(e)$, and given $\beta(1),
\dots, \beta(k-1)$, define $\beta(k)$ to be any element of the set
$\displaystyle \arg \max_{e \in S_k} \zeta(e)$, where
$S_k := K_V \setminus \{ \beta(1), \dots, \beta(k-1) \}$.
It is clear that this inductively covers all $e \in K_V$, by the previous
paragraph. Hence $\beta : \N \to K_V$ is a bijection such that
$\zeta(\beta(1)) \geq \zeta(\beta(2)) \geq \cdots$. Now define $\psi =
\psi_\zeta := \beta^{-1}$.
\end{proof}
%%}
\newpage

\begin{proof}[Proof of Proposition \ref{P2}]\hfill
\begin{enumerate}
\item Note that $\Psi_k$ is defined at all but countably many graphs in
$\G(V)$. Moreover, the $k$th minimum edge of $G \in \G(V)$ is $n$ if and
only if $n \geq k$ and $\psi(G) \cap \{ 1, \dots, n-1 \}$ has size
exactly $k-1$. This means that $G \in \E(\{ 1, \dots, n-1 \} \setminus S,
S \coprod \{ n \})$, where $S \subset \{ 1, \dots, n-1 \}$ has size
precisely $k-1$. Now there are precisely $\binom{n-1}{k-1}$ such sets
$S$, and by Proposition \ref{P6}, each corresponding set $\E (\{ 1,
\dots, n-1 \} \setminus S, S \coprod \{ n \})$ has measure $(1-p)^{n-k}
p^k$.
Hence we exclude the countable (measure zero) set of graphs with fewer
than $k$ edges, and compute:
\[ \be_{\mu_p}[\Psi_k] = \sum_{n=k}^\infty n \cdot \binom{n-1}{k-1} p^k
(1-p)^{n-k} = k p^k \sum_{n=k}^\infty \binom{n}{k} (1-p)^{n-k}. \]

\noindent On the other hand, the Binomial Formula easily yields:
\begin{align*}
p^{-(k+1)} = &\ \sum_{l=0}^{\infty} \binom{-(k+1)}{l} (-(1-p))^l\\
= &\ \sum_{l=0}^{\infty} (-1)^l \binom{(k+1) + l - 1}{l} (-(1-p))^l =
\sum_{l=0}^{\infty} \binom{k+l}{k} (1-p)^l.
\end{align*}

\noindent Setting $l = n-k$, the expected value above equals $k p^k \cdot
p^{-(k+1)} = k/p$.

\item Note that $\zeta(\psi_\zeta^{-1}(1)) \geq \zeta(\psi_\zeta^{-1}(2))
\geq \cdots$ by choice of $\psi_\zeta$. It is clear that $\normi{G} =
\zeta(\psi_\zeta^{-1}(n))$ if $G \in \E_\zeta(\{ 1, \dots, n-1 \}, \{ n
\})$, with $\E_\zeta$ denoting the $\E$-set corresponding to
$\psi_\zeta$. Since these sets partition $\G(V) \setminus \{ {\bf 0} \}$,
use Theorem \ref{T1} to compute:
\begin{align*}
\be_{\mu_p}[f(\normi{.})] = &\ \sum_{n \in \N}
f(\zeta(\psi_\zeta^{-1}(n))) \mu_p(\E_\zeta(\{ 1, \dots, n-1 \}, \{ n
\}))\\
= &\ \sum_{n \in \N} p (1-p)^{n-1} f(\zeta(\psi_\zeta^{-1}(n))).
\end{align*}

\item Define $f_N(G) := \sum_{n=1}^N \varphi(\psi^{-1}(n)) {\bf 1}_{G \in
\E(\emptyset, \{ n \})}$. Thus, $\{ f_N \}$ is a nondecreasing sequence
of $[0, \infty)$ valued $\mu$-measurable functions on $\G(V)$. It is not
hard to show that their pointwise limit at any $G \in \G(V)$ is
\[ \sum_{n \in \N} \varphi(\psi^{-1}(n)) {\bf 1}_{G \in \E(\emptyset, \{
n \})} = \sum_{e \in K_V} \varphi(e) {\bf 1}_{G \in \E(\emptyset, \{
\psi(e) \})} \leq \sum_{e \in K_V} \varphi(e) = \normo{K_V} < \infty. \]

\noindent Moreover, in computing $\normo{G}$, $\varphi(e)$ is a summand
if and only if $e \in G$, i.e., $G \in \E(\emptyset, \{ \psi(e) \})$.
This proves the first statement.
Now use Proposition \ref{P6} and the Monotone Convergence Theorem to
compute:
\begin{align*}
\be_{\mu_p}[\normo{G}] = &\ \lim_{N \to \infty} \be_{\mu_p}[f_N] =
\lim_{N \to \infty} \sum_{n=1}^N \varphi(\psi^{-1}(n))
\mu_p(\E(\emptyset, \{ n \}))\\
= &\ \sum_{n=1}^\infty \varphi(\psi^{-1}(n)) \cdot p = p \normo{K_V},
\end{align*}

%\begin{align*}
%& \be_{\mu_p}[\normo{G}] = \be_{\mu_p} \left[ \lim_{N \to \infty} f_N
%\right] = \lim_{N \to \infty} \be_{\mu_p}[f_N] = \lim_{N \to \infty}
%\sum_{n=1}^N \varphi(\psi^{-1}(n)) \be_{\mu_p}[{\bf 1}_{G \in
%\E(\emptyset, \{ n \})}]\\
%=\ & \lim_{N \to \infty} \sum_{n=1}^N \varphi(\psi^{-1}(n))
%\mu_p(\E(\emptyset, \{ n \})) = \sum_{n=1}^\infty \varphi(\psi^{-1}(n))
%\cdot p = p \sum_{e \in E} \varphi(e) = p \normo{K_V},
%\end{align*}

\noindent which proves the first part of the second statement.
For the second part, note that $(\normo{G})^2 = \lim_{N \to \infty}
f_N^2$. Now write out the summand:
\begin{align*}
f_N^2(G) = &\ \sum_{n=1}^N \varphi(\psi^{-1}(n))^2 {\bf 1}_{G \in
\E(\emptyset, \{ n \})}\\
&\ + 2 \sum_{n=1}^N \sum_{m=1}^{n-1} \varphi(\psi^{-1}(m))
\varphi(\psi^{-1}(n)) {\bf 1}_{G \in \E(\emptyset, \{ m \})} {\bf 1}_{G
\in \E(\emptyset, \{ n \})}.
\end{align*}

\noindent Taking expectations yields:
\begin{eqnarray*}
\be_{\mu_p}[f_N^2(G)] & = & p \sum_{n=1}^N \varphi(\psi^{-1}(n))^2 + 2
p^2 \sum_{n=1}^N \sum_{m=1}^{n-1} \varphi(\psi^{-1}(m))
\varphi(\psi^{-1}(n))\\
& = & p(1-p) \sum_{n=1}^N \varphi(\psi^{-1}(n))^2 + \left( p \sum_{n=1}^N
\varphi(\psi^{-1}(n)) \right)^2.
\end{eqnarray*}

\noindent From above computations, the second term is just
$\be_{\mu_p}[f_N]^2$, which converges to $(p \normo{K_V})^2$ by the first
part. Hence as above, using Proposition \ref{P6} and the Monotone
Convergence Theorem, $\be_{\mu_p}[(\normo{G})^2]$ equals
$\displaystyle \lim_{N \to \infty} \be_{\mu_p}[f_N^2(G)] = p(1-p)
\|K_V\|^1_{\varphi^2} + p^2 (\normo{K_V})^2$.

\item This is similar to the previous part: define
\[ f_N(G) := \prod_{n=1}^N \left( 1 + (\phi(\psi^{-1}(n)) - 1) {\bf 1}_{G
\in \E(\emptyset, \{ n \})} \right). \]

\noindent Once again, $0 \leq f_N(G) \leq f_{N + 1}(G) \leq \prod_{e \in
K_V} \phi(e) < \infty$ for all $G$, so we can apply the Monotone
Convergence Theorem. Moreover, the pointwise limit of the $f_N$ is
precisely $1 + (\normp{.})^n$, and one easily checks that the expectation
of any product of $k$ distinct indicators as above is $p^k$. This proves
that
$\displaystyle
\be_{\mu_p}[1 + (\normp{.})^n] = \lim_{N \to \infty} \prod_{n=1}^N (1 +
p(\phi(\psi^{-1}(n)) - 1))$, 
%\[ \be_{\mu_p}[1 + (\normp{.})^n] = \be_{\mu_p}[\lim_{N \to \infty} f_N]
%= \lim_{N \to \infty} \be_{\mu_p}[f_N] = \lim_{N \to \infty}
%\prod_{n=1}^N (1 + p(\phi(\psi^{-1}(n)) - 1)), \]
and the result follows.
\end{enumerate}
\end{proof}
%}}}

\section{Haar integration and Fourier analysis}

We now study graph space $\G(V)$ in further detail. Recall that $\G(V)$
is a compact topological group; these are objects for which a
comprehensive theory of analysis and probability has been systematically
developed in the literature -- see e.g.~\cite{Gre,P,Ru}. In this section
we further explore two aspects of the theory: first, we find a more
familiar model for graph space as a compact group with Haar measure.
Second, we study Fourier analysis on $\G(V)$. This includes classifying
the Pontryagin dual, as well as all positive definite functions on
$\G(V)$.

%{{{1 Section 3.1 - Haar integration on graph space
\subsection{Haar integration on graph space}

In this part we study the relationship between the Haar measure on
$\G(V)$ and the Lebesgue measure on $\R$. As seen above, the Haar measure
of an $\epsilon$-ball is $\epsilon$. This property also holds (up to
scaling by 2) for the usual Lebesgue measure $\mu_\R$ on the real line.
Thus, it is natural to ask if the two measure spaces $(\G(V), \muhaar =
\mu_{1/2})$, and $(\R, \mu_\R)$ -- or more precisely, the circle group
$S^1$ with its Haar measure -- are related. If so, how does one account
for the ``Jacobian" in transforming Haar integration from $\G(V)$ into
the usual Lebesgue theory on $\R$? These questions are the focus of the
next main result in the paper.

\begin{proof}[Proof of Theorem \ref{Thaar}]
Note that $\heart{\cdot}{2}$ is a continuous map $: \G(V) \to \R$, hence
(Borel) measurable with respect to the respective Borel
$\sigma$-algebras. Thus, consider two measures on the Borel
$\sigma$-algebra $\B_{\G(V)}$, given by $A \mapsto \mu_{1/2}(A)$ and $A
\mapsto \mu_\R(\heart{A}{2})$. The latter is indeed a measure that
satisfies countable additivity because $\heart{\cdot}{2}$ is a bijection
except on a countable set, and countable sets have measure zero in either
measure.

We now claim that the measures $\mu_{1/2}$ and $\mu_{\R} \circ
\heart{\cdot}{2}$ on $(\G(V), \B_{\G(V)})$ agree on all sets $\E(I_0,
I_1)$ for finite disjoint $ I_0, I_1 \subset \N$. If this holds, then
since these sets are closed under finite intersections, Proposition
\ref{Pjp} implies that these measures are identical (using Theorem
\ref{Tsigma}). This proves the first assertion.

To prove the claim, recall from \cite{Cam} that $([0,1], \mu_\R)$ is
equivalent -- via binary expansion -- to the countable sequences of
independent tosses of a fair coin.
Moreover, countable sets have probability zero in all cases, and the set
$\E(I_0, I_1)$ corresponds precisely to all sequences where the $i$th
coin toss is a tail if $i \in I_0$, and a head if $i \in I_1$. In turn,
these correspond to the set of all $x \in [0,1]$ whose $i$th digit in the
binary expansion is $0$ if $i \in I_0$ and $1$ if $i \in I_1$ -- and
these sets are measurable because they are unions of intervals.
It is now clear, by partitioning $[0,1]$ into $2^{\max(I_0 \cup
I_1)}$-many intervals of equal length, that each of these sets has
measure $2^{-|I_0 \cup I_1|}$, which proves the claim.

We now show the second assertion.
%%{\color{red}
Note that $f(x) = \pm \infty$ only on a set of measure zero, since $f$ is
Lebesgue integrable. Next, the functions $f^\pm := \max(\pm f, 0)$ are
also measurable (and integrable) if $f$ is; hence by linearity it
suffices to prove the result for each of them. Thus,
%% Suppose}
suppose without loss of generality that $0 \leq f < \infty$.
We carry out a standard construction to approximate $f$ by a sequence of
nonnegative simple functions $0 \leq f_1 \leq f_2 \leq \cdots$ on
$[0,1]$, which converge pointwise to $f$ almost everywhere. Given $n \in
\N$, define $I_{n,k} := \left[ \frac{k-1}{2^n}, \frac{k}{2^n} \right)$
for $1 \leq k \leq 2^{2n}$, and $I_{n, 2^{2n} + 1} := [2^n, \infty)$. Now
define $A_{n,k} := f^{-1}(I_{n,k}) \subset [0,1]$, and $B_{n,k} := \{ G
\in \G(V) : f(\heart{G}{2}) \in I_{n,k} \}$. Thus by Theorem \ref{Thaar},
both $A_{n,k}$ and $B_{n,k}$ are measurable and of equal measures. Now
define the functions
\[ f_n := \sum_{k=1}^{2^{2n} + 1} \frac{k-1}{2^n} {\bf 1}_{A_{n,k}},
\qquad g_n := \sum_{k=1}^{2^{2n} + 1} \frac{k-1}{2^n} {\bf 1}_{B_{n,k}}.
\]

\noindent It is then standard that $0 \leq f_n \leq f_{n+1} \leq f$ at
each point, and $f_n(x) \to f(x)$ for all $x$. The same facts also hold
for $g_n$ and $g$, where we define: $g(G) := f(\heart{G}{2})$ and $g_n(G)
:= f_n(\heart{G}{2})$.
%Now use standard arguments via the Monotone Convergence Theorem, to
%finish the proof.
%
%%{\color{red}
Moreover, since $\heart{\cdot}{2}$ is measure-preserving, the simple
functions $f_n, g_n$ are pullbacks of each other (via the invertible map
$\heart{\cdot}{2}$ and its inverse -- outside the countable set
$\G_0(V)$, say). Hence,
\[ \be_\mu[g_n] = \int_{\G(V)} g_n(G)\ d\mu_{1/2} = \int_0^1 f_n(x)\ dx =
\int_0^1 f_n\ d \mu_\R\ \forall n \in \N. \]

\noindent Now use the Monotone Convergence Theorem twice:
\[ \be_{\mu_{1/2}} [f(\heart{\cdot}{2})] = \be_{\mu_{1/2}} [g] = \lim_{n
\to \infty} \be_{\mu_{1/2}} [g_n] = \lim_{n \to \infty} \int_0^1 f_n(x)\
dx = \int_0^1 f(x)\ dx. \]
%%}
\end{proof}

\begin{remark}\label{R3}
Note that $\heart{\cdot}{2}$ is not a bijection on $\G(V)$; thus to make
sense of $(\heart{\cdot}{2})^{-1}$, ignore all finite graphs $\G_0(V)$,
and/or cofinite graphs $\G_1(V)$ (since all countable sets have measure
zero). Then $\heart{\cdot}{2}$ is a measure-preserving bijection on the
complement, whence a random variable $T|_{\G(V) \setminus \G_0(V)}$ is
measurable if and only if $T \circ (\heart{\cdot}{2})^{-1} : [0,1]
\setminus \heart{\G_0(V)}{2} \to \G(V) \setminus \G_0(V) \to X$ is
measurable.
\end{remark}

\begin{remark}
We now illustrate an application of Theorem \ref{Thaar}. Recall the
definition of the ``$k$th minimum edge number" as defined in Proposition
\ref{P2}(1). One can now show that
\[ \Psi_1(G) = \min \psi(G) := - \lfloor \log_2 \heart{G}{2} \rfloor. \]

\noindent More generally, $\Psi_k(G)$ can be inductively defined as
$f_k(\heart{G}{2})$, where:
\[ f_k(x) := f_1 \left( x - \sum_{i=1}^{k-1} 2^{-f_i(x)} \right), \qquad
f_1(x) := - \lfloor \log_2 x \rfloor. \]

\noindent In particular, one can compute the expected value of $\Psi_1$
(with respect to $\mu_{1/2}$) using Theorem \ref{Thaar}, to be
$\displaystyle -\int_0^1 \lfloor \log_2(x) \rfloor\ dx$, which can be
shown to converge to $2$. Recall that this expectation was also computed
in Proposition \ref{P2}(1).
\end{remark}
%}}}

%{{{1 Section 3.2 - Pontryagin duality and Walsh-Rademacher functions
\subsection{Pontryagin duality and Walsh-Rademacher
functions}\label{Spontryagin}

We conclude the paper with a discussion of Fourier analysis in graph
space. We require the following terminology.

\begin{definition}
Suppose $V$ is any fixed vertex set.
\begin{enumerate}
\item A function $f : \G(V) \to \mathbb{C}$ is {\em positive definite} if
for all integers $n \in \N$ and $G_1, \dots, G_n \in \G(V)$, the matrix
$(f(G_i \Delta G_j))_{1 \leq i, j \leq n}$ is positive semidefinite.

\item Given a finite graph $E \in \G_0(V)$, the corresponding {\em Walsh
function} $\chi_E : \G(V) \to \mathbb{C}$ is defined as follows:
\[ \chi_E(G) := (-1)^{|E \cap G|} = \prod_{e \in E} \chi_{\{ e \}}(G). \]
\end{enumerate}
\end{definition}

The Walsh functions turn out to be important for several reasons,
including for Fourier analysis via Pontryagin duality. Recall the
following terminology:

\begin{definition}
A unitary character of a group $\G$ is a group homomorphism $\chi : \G
\to S^1$, the unit circle in $\mathbb{C}^\times$. The {\em Pontryagin
dual} of a locally compact abelian group is simply the set of continuous
unitary characters, which form a group under pointwise multiplication.
\end{definition}

\noindent Note for $\G = \G(V)$ that all unitary characters have image in
$\{ \pm 1 \}$, since $\G(V)$ is a group with exponent 2.
Now the following result completely characterizes all positive definite
functions on $\G(V)$, as well as its Pontryagin dual.

\begin{theorem}\label{Tpont}
Suppose $V$ is a countable set. Then the Pontryagin dual to $\G(V)$ is
naturally identified with its subgroup $\G_0(V)$ of finite graphs, via
Walsh functions. They also form an orthonormal basis of $L^2(\G(V),\R)$.

Moreover, a function $f : \G(V) \to \mathbb{C}$ is positive definite and
satisfies $f({\bf 0}) = 1$, if and only if there exists a probability
measure $\mu$ on $\G_0(V)$ (i.e., a countable set of nonnegative numbers
$\mu(H)$ that add up to $1$), such that
\[ f(G) = \sum_{H \in \G_0(V)} (-1)^{|G \cap H|} \mu(H). \]
\end{theorem}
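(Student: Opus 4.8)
The plan is to prove them in the following order: first the Pontryagin dual, then orthonormality and completeness in $L^2$, and finally the characterization of positive definite functions. The classification of the dual will be the backbone, since both the $L^2$-basis statement and Bochner-type characterization of positive definite functions rest on knowing that the Walsh functions $\chi_E$ ($E \in \G_0(V)$) exhaust the continuous unitary characters.

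\textbf{Step 1: identifying the Pontryagin dual.} I would first verify that each $\chi_E$ is indeed a continuous unitary character: multiplicativity, $\chi_E(G \Delta G') = \chi_E(G) \chi_E(G')$, follows from $|E \cap (G \Delta G')| \equiv |E \cap G| + |E \cap G'| \pmod 2$, and continuity is immediate since $\chi_E$ depends only on the finitely many coordinates indexed by edges in $E$. The map $E \mapsto \chi_E$ is a group homomorphism from $(\G_0(V), \Delta)$ into the dual, and it is injective because $\chi_E = \chi_{E'}$ would force $\chi_{E \Delta E'} \equiv 1$, which fails at $G = \{e\}$ for any $e \in E \Delta E'$. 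The crux is surjectivity: given any continuous character $\chi$, since $\chi$ takes values in $\{\pm 1\}$ (as noted, $\G(V)$ has exponent $2$), continuity at $\mathbf 0$ forces $\chi$ to equal $1$ on some basic open neighborhood $\E(\emptyset, \emptyset)$-type cylinder, i.e.\ $\chi$ depends only on finitely many edge-coordinates $e_1, \dots, e_m$. Then $\chi$ is determined by its values $\chi(\{e_i\}) \in \{\pm 1\}$, and setting $E := \{ e_i : \chi(\{e_i\}) = -1 \}$ gives $\chi = \chi_E$ by multiplicativity.

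\textbf{Step 2: orthonormality and completeness in $L^2$.} Using the Haar measure $\mu_{1/2}$ from Theorem \ref{T1}, orthonormality $\be_{\muhaar}[\chi_E \overline{\chi_{E'}}] = \be_{\muhaar}[\chi_{E \Delta E'}] = \delta_{E, E'}$ reduces to showing $\be_{\muhaar}[\chi_F] = 0$ for $F \neq \emptyset$ and $= 1$ for $F = \emptyset$; the nonempty case follows because fixing a single edge $e \in F$ and conditioning, the factor $(-1)^{\mathbf 1_{e \in G}}$ averages to $\frac{1}{2} - \frac{1}{2} = 0$ under the Bernoulli$(\tfrac12)$ measure. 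For completeness, the standard route is that the characters of any compact abelian group form a complete orthonormal system in $L^2$ (Peter--Weyl for the abelian case); combined with Step 1 this gives the claim directly. Alternatively, one may invoke Stone--Weierstrass: the linear span of the $\chi_E$ is a conjugation-closed subalgebra of $C(\G(V))$ separating points, hence dense in the sup norm and therefore in $L^2$.

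\textbf{Step 3: positive definite functions (the main obstacle).} This is the Bochner theorem for the compact group $\G(V)$, whose dual is the discrete group $\G_0(V)$. The easy direction is that any $f(G) = \sum_{H} (-1)^{|G \cap H|} \mu(H)$ with $\mu$ a probability measure is positive definite with $f(\mathbf 0)=1$: positive definiteness follows because each $\chi_H$ is positive definite (the matrix $(\chi_H(G_i \Delta G_j))_{ij} = (\chi_H(G_i)\overline{\chi_H(G_j)})_{ij}$ is a rank-one Gram matrix), and a convex combination of positive definite functions is positive definite; normalization is clear since each $\chi_H(\mathbf 0) = 1$. The hard direction is the converse, and here I expect the main difficulty: given a positive definite $f$ with $f(\mathbf 0)=1$, one must produce the measure $\mu$. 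The natural approach is to define the Fourier coefficients $\mu(H) := \be_{\muhaar}[f \cdot \chi_H]$ (using $\chi_H$ real-valued), show each is nonnegative, that they sum to $1$, and that the Fourier inversion $f(G) = \sum_H \mu(H) \chi_H(G)$ holds. Nonnegativity of the coefficients is where positive definiteness must be used essentially — one typically expands $\be[f \chi_H]$ as a limit of averages $\frac{1}{n^2}\sum_{i,j} f(G_i \Delta G_j) \overline{\chi_H(G_i)} \chi_H(G_j)$ over suitable sampling, each of which is $\geq 0$ by the positive semidefiniteness hypothesis applied to the vector $(\chi_H(G_1), \dots, \chi_H(G_n))$. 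The remaining care is convergence: one must ensure $f \in L^2$ (which follows from boundedness, $|f(G)| \leq f(\mathbf 0) = 1$ by positive definiteness applied to $2 \times 2$ matrices) so that the $L^2$-expansion from Step 2 applies, and then upgrade $L^2$-convergence of the Fourier series to pointwise equality using continuity of $f$ together with absolute summability $\sum_H \mu(H) = f(\mathbf 0) = 1 < \infty$, which makes the series converge uniformly.
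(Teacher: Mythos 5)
Your Steps 1 and 2 are correct, and they differ from the paper mainly in being self-contained: the paper gets the identification $\ghat \cong \G_0(V)$ from Theorem \ref{Tplancherel}(4) (deferring to \cite{Fi,Wa}) and the orthonormal-basis claim from Proposition \ref{Pwalsh}, where completeness is obtained by transporting the classical Walsh system on $[0,1]$ through the measure-preserving map $\heart{\cdot}{2}$ of Theorem \ref{Thaar}. Your direct argument (a continuous $\{\pm 1\}$-valued character is trivial on a basic open subgroup $\E(F,\emptyset)$, hence depends on finitely many coordinates, hence is some $\chi_E$) and your Peter--Weyl/Stone--Weierstrass route to completeness are legitimate substitutes. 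The real divergence is Step 3: the paper disposes of the positive definite characterization in one line by citing Bochner's theorem \cite[Theorem 1.4.3]{Ru} for the compact abelian group $\G(V)$, whereas you attempt to prove Bochner's theorem from scratch, and that in-house proof has two genuine gaps.

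First, your claim that $f \in L^2$ ``follows from boundedness'' is false: positive definiteness yields $|f| \leq f(\mathbf{0})$ but no measurability at all, and the continuity you invoke at the end is neither a hypothesis of the theorem nor derivable from positive definiteness (the indicator of $\{\mathbf{0}\}$ is positive definite with $f(\mathbf{0})=1$, yet is not of the stated form since any such sum is continuous; Rudin's Bochner theorem, which the paper cites, assumes continuity). This defect is arguably inherited from the statement itself, but your argument is where it becomes visible, since both the definition of $\mu(H) := \be_{\muhaar}[f\chi_H]$ and your sampling argument for $\mu(H) \geq 0$ need $f$ at least measurable. Second, and internal to your own logic, the absolute summability $\sum_H \mu(H) = f(\mathbf{0}) = 1$ is asserted rather than proved: that identity is exactly Fourier inversion at $\mathbf{0}$, i.e.\ the conclusion of Bochner's theorem, so using it to get uniform convergence is circular. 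What is needed is an a priori bound on partial sums of the nonnegative coefficients; in this group it can be done cleanly by convolving with the approximate identity $u_n := 2^n \mathbf{1}_{\E(E_n(\psi),\emptyset)}$, whose Fourier transform is the indicator of the finite group of Walsh functions $\chi_E$ with $E \subset E_n(\psi)$: then $\sum_{E \subset E_n(\psi)} \mu(E) = (f * u_n)(\mathbf{0}) \leq \sup|f| \leq 1$ for every $n$, whence $\sum_H \mu(H) \leq 1$, and only then does your uniform-convergence and coefficient-matching argument close the loop. With continuity added as a hypothesis and these two repairs made, your outline is a valid (and more informative) alternative to the paper's citation of Bochner.
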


\noindent (In particular, $f$ has image in $[-1,1]$.)

Since $\G(V)$ is a compact abelian group, one can also apply the theory
of Pontryagin duality to carry out Fourier analysis on it, or to state
Parseval's identity and Plancherel's theorem (a useful reference is
\cite[Chapter 1]{Ru}). We now write down some of the results in this
setting.

\begin{prop}\label{Pwalsh}
Suppose $V$ is an arbitrary set (of labelled vertices).
\begin{enumerate}
\item The ``group algebra" $L^1(\G(V),\R)$ is a Banach algebra under
convolution.

\item The set of Walsh functions $\{ \chi_E : E \in \G_0(V) \}$ is an
orthonormal subset of $L^2(\G(V),\R)$. When $V$ is countable, the Walsh
functions form a complete/Hilbert basis; moreover, they transform into
the usual Walsh functions -- i.e., products of {\em Rademacher functions}
-- via the Haar measure-preserving map $\heart{\cdot}{2}$.
\end{enumerate}
\end{prop}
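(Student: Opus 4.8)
The plan is to handle the two parts separately, combining standard convolution-algebra theory with the measure-preserving identification supplied by Theorem \ref{Thaar}. For part (1), I would invoke the general theory of group algebras on locally compact groups (see e.g.\ \cite[Chapter 1]{Ru}). Since $\G(V)$ is a compact abelian group carrying the Haar measure $\muhaar = \mu_{1/2}$, define $(f * g)(G) := \int_{\G(V)} f(H)\, g(G + H)\, d\muhaar(H)$, using that $\G(V)$ is $2$-torsion so that $G - H = G + H$. The only facts needing checking are measurability of $f*g$, the submultiplicative bound $\|f * g\|_1 \leq \|f\|_1 \|g\|_1$, and associativity; all three follow from the Fubini--Tonelli theorem together with the translation-invariance of $\muhaar$. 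These are entirely classical, and I would cite the standard references rather than reproduce them.

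For part (2), I would first prove orthonormality via the identity $\chi_E \cdot \chi_F = \chi_{E \Delta F}$, which holds because $|(E \Delta F) \cap G| \equiv |E \cap G| + |F \cap G| \pmod 2$. Hence $\langle \chi_E, \chi_F\rangle_{L^2} = \int_{\G(V)} \chi_{E \Delta F}\, d\muhaar$, and it remains to evaluate this at $1$ when $E = F$ and at $0$ otherwise. The first is immediate since $\chi_\emptyset \equiv 1$; for the second, writing $\muhaar$ as the product measure $\prod_e \mu_{1/2,e}$ (Theorem \ref{T1}), the integrand factors over the edges of the nonempty graph $H := E \Delta F$, and each factor $\int (-1)^{\mathbf{1}_{e \in G}}\, d\mu_{1/2,e} = \tfrac12 - \tfrac12 = 0$. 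For completeness when $V$ is countable, I would apply the Stone--Weierstrass theorem: the real linear span of $\{\chi_E\}$ is a unital subalgebra of $C(\G(V),\R)$ (unital since $\chi_\emptyset \equiv 1$, a subalgebra by the product identity) that separates points (if $G \neq G'$ differ at an edge $e$, then $\chi_{\{e\}}$ separates them), hence it is dense in the sup norm; as $\G(V)$ is then a compact metric space (Theorem \ref{Theart2}), $C(\G(V),\R)$ is dense in $L^2(\G(V),\R)$ and completeness follows. Alternatively one may simply cite Theorem \ref{Tpont}.

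Finally, for the identification with products of Rademacher functions, I would unwind the binary expansion behind $\heart{\cdot}{2}$. Writing the $n$th Rademacher function as $r_n(x) = (-1)^{d_n(x)}$, where $d_n(x)$ is the $n$th binary digit of $x \in [0,1]$, the formula $\heart{G}{2} = \sum_{e \in G} 2^{-\psi(e)}$ gives $d_n(\heart{G}{2}) = \mathbf{1}_{\psi^{-1}(n) \in G}$, so $r_n(\heart{G}{2}) = \chi_{\{\psi^{-1}(n)\}}(G)$. Multiplying over the edges of a finite graph $E$ yields $\chi_E = W_{\psi(E)} \circ \heart{\cdot}{2}$, where $W_S := \prod_{n \in S} r_n$ is the classical Walsh function indexed by the finite set $S = \psi(E)$. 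Because $\heart{\cdot}{2}$ is a measure-preserving bijection off a null set (Theorem \ref{Thaar}), composition with it induces a unitary isomorphism $L^2([0,1],dx) \to L^2(\G(V),\muhaar)$ carrying the classical Walsh orthonormal basis onto $\{\chi_E : E \in \G_0(V)\}$; this simultaneously re-proves completeness and records the claimed transformation. The only delicate point anywhere in the argument is that the digit $d_n(\heart{G}{2})$ is ambiguous exactly on the dyadic rationals, that is, on the images of $\G_0(V) \cup \G_1(V)$; but this set is countable and hence $\muhaar$-null (Theorem \ref{T1}), so it is immaterial for every $L^2$ assertion. I expect this bookkeeping to be the main---indeed the only genuine---obstacle.
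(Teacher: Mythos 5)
Your proposal is correct, and its final paragraph --- transporting the classical Rademacher/Walsh system on $[0,1]$ through the Haar measure-preserving map $\heart{\cdot}{2}$ of Theorem \ref{Thaar} --- is precisely the paper's entire proof of part (2); part (1) is likewise handled in both cases by a citation to \cite[Chapter 1]{Ru} (the paper points to Theorem 1.1.7 there). Where you genuinely diverge is that you also prove orthonormality and completeness intrinsically on $\G(V)$: orthonormality from the character identity $\chi_E \chi_F = \chi_{E \Delta F}$ together with the factorization of $\mu_{1/2}$ as a product measure, each nontrivial coordinate factor integrating to $\tfrac12 - \tfrac12 = 0$; and completeness from Stone--Weierstrass applied to the linear span of the $\chi_E$. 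The paper instead derives both facts in one stroke from the classical statement that the Walsh functions form a complete orthonormal system in $L^2([0,1],\R)$. Your intrinsic route buys something concrete: the proposition asserts orthonormality for \emph{arbitrary} $V$, whereas Theorem \ref{Thaar} is only available for countable $V$; your cylinder-set computation uses only finitely many coordinates at a time, hence only the values of the Haar measure on $\Sigma_{\rm meas}$, which the paper's proof of Theorem \ref{T1} establishes for any $V$ --- so your argument covers the general-$V$ claim that the paper's one-line derivation does not literally reach. Conversely, the paper's route is shorter and delegates all the work to classical Walsh theory. Your handling of the dyadic-rational ambiguity --- the binary digits of $\heart{G}{2}$ are ill-defined exactly on the countable, hence $\muhaar$-null, set $\G_0(V) \cup \G_1(V)$ --- is indeed the one point of care in the transfer step, and you resolve it correctly.
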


\noindent Note that the first part follows from \cite[Theorem 1.1.7]{Ru},
and the second part follows from Theorem \ref{Thaar}, since the Walsh
functions form a complete orthonormal system in $L^2([0,1],\R)$.
Moreover, they comprise the Pontryagin dual of $\G(V)$:

\begin{theorem}\label{Tplancherel}
Suppose $\ghat$ is the Pontryagin dual group to $\G(V)$ for a set $V$.
\begin{enumerate}
\item For all finite sets $E \in \G_0(V)$, we have $\chi_E \in \ghat$,
with image in $\{ \pm 1 \}$.

\item $\ghat$ is a discrete (locally compact) abelian group, which is
metrizable if $V$ is countable.

\item \emph{(Plancherel's Theorem.)} The Fourier transform, when
restricted to $(L^1 \cap L^2)(\G(V))$, is a linear isometry in the
$L^2$-metric, onto a dense subset of $L^2(\ghat)$. Hence it has a unique
extension to a unitary operator from $L^2(\G(V))$ onto $L^2(\ghat)$ (for
some Haar measure $\mu^\wedge$ on $\ghat$).

\item When $V$ is countable, $\ghat$ is precisely the set of Walsh
functions, and the assignment $\chi_E \mapsto E := \{ e \in K_V :
\chi_E(\{ e \}) = -1 \}$ is a group isomorphism onto $(\G_0(V), \Delta)$.
\end{enumerate}
\end{theorem}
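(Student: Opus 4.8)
The plan is to dispatch the four parts in order of increasing difficulty, reserving the bulk of the work for part (4). Parts (1)--(3) are essentially formal: part (1) is a direct computation, while parts (2) and (3) are instances of the general theory of Pontryagin duality for locally compact abelian groups, for which I would simply invoke \cite[Chapter 1]{Ru} once it is observed that $\G(V)$ is a compact abelian group (Theorem \ref{Theart2}) equipped with the explicit Haar measure $\mu_{1/2}$ (Theorem \ref{T1}).

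For part (1), I would first check that $\chi_E$ is a homomorphism: since intersection distributes over symmetric difference, $E \cap (G \Delta G') = (E \cap G) \Delta (E \cap G')$, and $|A \Delta B| \equiv |A| + |B| \pmod 2$, so $\chi_E(G \Delta G') = \chi_E(G) \chi_E(G')$. The image lies in $\{\pm 1\} \subset S^1$ by definition, and continuity is immediate because $|E \cap G|$ depends only on the finitely many coordinates indexed by $E$, making $\chi_E$ locally constant in the product topology. For part (2), the dual of a compact group is discrete, hence locally compact, and it is abelian because the group law on $\ghat$ is pointwise multiplication of characters. Discreteness already forces metrizability, and when $V$ is countable the identification in part (4) exhibits $\ghat$ as a countable discrete group. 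Part (3) is then the Plancherel theorem for locally compact abelian groups applied verbatim.

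The heart of the matter is part (4): showing that every continuous unitary character $\chi$ of $\G(V)$ is a Walsh function $\chi_E$ with $E$ finite. Since $\G(V)$ has exponent $2$, every character satisfies $\chi(G)^2 = \chi(G \Delta G) = \chi({\bf 0}) = 1$, so $\chi$ takes values in $\{\pm 1\}$ (as already noted before the theorem). I would then set $E := \{e \in K_V : \chi(\{e\}) = -1\}$ and observe that on any finite graph $G = \{e_1\} \Delta \cdots \Delta \{e_m\}$ one has $\chi(G) = \prod_i \chi(\{e_i\}) = (-1)^{|G \cap E|} = \chi_E(G)$; thus $\chi$ agrees with the formal symbol $\chi_E$ on the dense subgroup $\G_0(V)$ (Theorem \ref{Theart2}(2)).

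The main obstacle is establishing that $E$ is finite, for only then is $\chi_E$ a bona fide continuous function and the density argument legitimate. This I would extract from continuity of $\chi$ at ${\bf 0}$: since $\chi({\bf 0}) = 1$ and $1$ is isolated in $\{\pm 1\}$, there is a basic open neighborhood of ${\bf 0}$ — necessarily of the form $\{G : G \cap F = \emptyset\}$ for some finite $F \subset K_V$ — on which $\chi \equiv 1$. For any edge $e \notin F$ the graph $\{e\}$ lies in this neighborhood, so $\chi(\{e\}) = 1$ and $e \notin E$; hence $E \subseteq F$ is finite. With $E$ finite, $\chi_E$ is continuous, and since $\chi = \chi_E$ on the dense set $\G_0(V)$ they coincide everywhere. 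Together with part (1) this yields $\ghat = \{\chi_E : E \in \G_0(V)\}$. Finally, the assignment $\chi_E \mapsto E$ is a bijection, since $E$ is recovered from $\chi_E$ through its values on singletons, and a homomorphism, because $\chi_E \chi_{E'} = \chi_{E \Delta E'}$ by the same parity identity used in part (1); this is precisely the claimed isomorphism onto $(\G_0(V), \Delta)$.
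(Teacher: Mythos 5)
Your proposal is correct, and for parts (1)--(3) it follows essentially the paper's own route: direct verification that the Walsh characters are continuous homomorphisms (the paper exhibits $\chi_{\{e\}}^{-1}(-1) = \E(\emptyset,\{e\})$ and $\chi_{\{e\}}^{-1}(1) = \E(\{e\},\emptyset)$ as open sets, while you observe that $\chi_E$ is locally constant --- the same fact), plus citations of \cite[Chapter 1]{Ru} for discreteness of the dual of a compact group and for Plancherel. A minor improvement on your side: as you note, discreteness alone already gives metrizability of $\ghat$ (via the discrete metric), so countability of $V$ is not needed for that conclusion, whereas the paper derives it from separability of $\G(V)$. The genuine divergence is part (4), which the paper does not actually prove: it says only that the claim ``is also not hard to show'' and points to \cite{Fi,Wa}. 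You supply a complete, correct, elementary argument in its place. Your key steps --- every character has image in $\{\pm 1\}$ because $\G(V)$ is $2$-torsion; continuity at ${\bf 0}$ together with the isolation of $1$ in $\{\pm 1\}$ yields a basic neighborhood $\E(F,\emptyset)$ with $F$ finite on which $\chi \equiv 1$, forcing $E = \{e : \chi(\{e\}) = -1\} \subseteq F$ to be finite; agreement of $\chi$ with the continuous character $\chi_E$ on the dense subgroup $\G_0(V)$, hence everywhere; and the parity identity $\chi_E \chi_{E'} = \chi_{E \Delta E'}$ together with recovery of $E$ from singleton values --- are all sound, and they keep the classification of $\ghat$ self-contained within the paper's own toolkit (product topology, density of $\G_0(V)$ from Theorem \ref{Theart2}, $2$-torsion) rather than outsourced to the literature; indeed, since $\G_0(V)$ is dense in the product topology for arbitrary $V$, your argument even classifies $\ghat$ without the countability hypothesis. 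What the paper's approach buys is brevity; what yours buys is an actual proof.
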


\begin{proof}
To show (1), one shows that $\chi_{\{ e \}}$ is continuous for each $e
\in K_V$:
\[ \chi_{\{ e \}}^{-1}(-1) = \E(\emptyset, \{ e \}), \qquad \chi_{\{e
\}}^{-1}(1) = \E(\{ e \}, \emptyset), \]

\noindent and these are both open by Proposition \ref{P4}. Note that by
\cite[Theorem 1.2.5]{Ru}, $\ghat$ is discrete since $\G(V)$ is compact.
Similarly, $\ghat$ is metrizable since $\G(V)$ is separable. This proves
(2). Part (3) is shown (for more general $\G$) in \cite[Theorem
1.6.1]{Ru}. Part (4) is also not hard to show -- see e.g.~\cite{Fi,Wa}.
\end{proof}

Finally, we prove the remaining unproved result above.

\begin{proof}[Proof of Theorem \ref{Tpont}]
All but the last assertion follow from Proposition \ref{Pwalsh} and
Theorem \ref{Tplancherel}. To prove the last part, apply Bochner's
Theorem \cite[Theorem 1.4.3]{Ru} to the compact abelian group $\G(V)$.
Thus, every normalized function $f$ is of the form $\displaystyle f(G) =
\int_{\xi \in \ghat} \xi(G)\ d \mu(\xi)$ for some probability measure
$\mu$ on $\ghat$. Since $\ghat \cong \G_0(V)$ from above, every measure
is a countable tuple as claimed.
\end{proof}
%}}}

\subsection*{Concluding remarks}

In this paper we analyzed measures and integration on labelled graph space
$\G(V)$. %that was initiated in \cite{KR1}.
We showed that $\G(V)$ with its Haar measure is very closely related to
the circle with its Haar measure, which allowed us to transport
Haar-Lebesgue integration on $[0,1]$ over to graph space $\G(V)$.

A more involved task is to study random graphs -- i.e., sequences of
$\G(V)$-valued random variables. This involves the analysis of measurable
functions from a probability space into $\G(V)$ (as opposed to
real-valued functions of graphs studied in this paper). Note that graph
space $\G(V)$ is a 2-torsion group, and hence does not embed as a group
into a normed linear space. Thus the next step in the study of labelled
graphs and their limits involves developing the foundations of
probability theory on $\G(V)$, and studying probability inequalities and
stochastic convergence on random graphs. The study of probability theory
on graph space is addressed in forthcoming work \cite{KR3}. Such a
formalism is essential in order to discuss issues like probability
generating mechanisms for graphs, or to sample from graph space.

\subsection*{Acknowledgments}

We thank David Montague and Doug Sparks for useful discussions.

%{{{1 Bibliography

%}}}

\end{document}